\documentclass[twoside]{article}
\usepackage{amssymb}
\usepackage{amsthm}
\usepackage{amsmath}
\usepackage{enumerate}
\usepackage{url}
\usepackage{graphicx}
\usepackage{hyperref}
\usepackage{subcaption}
\usepackage[title]{appendix}
\usepackage[american]{babel}

%DEFINITIONS
% References -------------------------------------------------------
\newcommand{\textlineskip}{\baselineskip=11.5dd plus0.5dd minus0.2dd}
\makeatletter
\renewenvironment{thebibliography}[1]
{
  \par \bigskip \setcounter{subsect}{0}
  \setcounter{equation}{0}
  \addtocounter{section}{1}\begin{center}\thesection.
    \uppercase{References} \end{center} \par \smallskip
  % \section{\refname}% <-- this line was changed from \chapter* to \section*
  \@mkboth{\MakeUppercase\refname}{\MakeUppercase\refname}%
  \list{\@biblabel{\@arabic\c@enumiv}}%
  {\settowidth\labelwidth{\@biblabel{#1}}%
    \leftmargin\labelwidth
    \advance\leftmargin\labelsep
    \@openbib@code
    \usecounter{enumiv}%
    \let\p@enumiv\@empty
    \renewcommand\theenumiv{\@arabic\c@enumiv}}%
  \sloppy
  \clubpenalty4000
  \@clubpenalty \clubpenalty
  \widowpenalty4000%
  \sfcode`\.\@m}
{\def\@noitemerr
  {\@latex@warning{Empty `thebibliography' environment}}%
  \endlist}
\makeatother

% THEOREMS -------------------------------------------------------

\def\qed{\hfill $\square$\medskip}

\newtheorem{theorem}{\noindent\hspace*{5mm}Theorem}
\newtheorem*{maintheorem}{\noindent\hspace*{5mm}Main Theorem}

\newtheorem{proposition}[theorem]{\noindent\hspace*{5mm}Proposition}

\newtheorem{conjecture}[theorem]{\noindent\hspace*{5mm}Conjecture}

\newtheorem{definition}[theorem]{\noindent\hspace*{5mm}Definition}

\newtheorem{algorithm}[theorem]{\noindent\hspace*{5mm}Algorithm}

\newtheorem{remark}[theorem]{\noindent\hspace*{5mm}Remark}

\numberwithin{equation}{section}
\numberwithin{theorem}{section}

%FOR SETTING EQ.~(1.1)

%FOR INSERTING GRAHICS FILES IN .PCX  OR .BMP  FORMATS:

%FIGURES CAPTIONS:

%TABLE CAPTIONS:

\catcode`\@=11

%================================
%PAGE LAYOUT%

\textwidth 28cc \textheight 42cc \topmargin0in \evensidemargin 1in
\oddsidemargin 1in \raggedbottom
\setlength{\parskip}{2pt}\setlength{\parindent}{5mm}

%================================
%FONTS

\font\twrm=cmr12  
  
  \font\eightrm=cmr8

%================================
% Add author's  additional fonts if necessary

%================================

%SECTION COMMANDS
\newcounter{subsect}

\def\section#1{\par \bigskip \setcounter{subsect}{0}
\setcounter{equation}{0}

%================================
% Reset theorem counter in each section
\setcounter{theorem}{0}
%================================

\addtocounter{section}{1}
\begin{center}\thesection. \uppercase{#1}
\end{center} \par \smallskip }
%#1 \end{center} \par \smallskip }

\def\subsection#1{\par \medskip
\addtocounter{subsect}{1}\begin{center}{ \eightrm
\thesection.\thesubsect. \uppercase{#1}} \end{center} \par
\smallskip }

%====================
% Add Author's definitions if necessary

% NEW COMMANDS
\newcommand{\abs}[1]{|#1|}
\newcommand{\set}[1]{\left\{#1\right\}}

\newcommand{\arrowsv}[0]{\overset{v}{\rightarrow}}

\newcommand{\mH}[0]{\mathcal{H}}

% DECLARE MATH OPERATORS

\DeclareMathOperator{\V}{V}
\DeclareMathOperator{\E}{E}

%===========================

\begin{document}

\begin{center} {\twrm ANNUAL OF SOFIA UNIVERSITY ``ST.~KLIMENT~OHRIDSKI" \\[3pt]
FACULTY OF MATHEMATICS AND INFORMATICS} \normalsize\textlineskip
\thispagestyle{empty} \setcounter{page}{1}

\vspace*{1.0in}

{\twrm \uppercase{LOWER BOUNDING THE FOLKMAN NUMBERS}\\ \large{$F_v(a_1, ..., a_s; m - 1)$}\\

\vspace*{2cc} {\eightrm ALEKSANDAR BIKOV, NEDYALKO NENOV}}
\end{center}

\vspace*{30dd}

{\parindent0pt \footnotesize \leftskip20pt \rightskip20pt
\baselineskip10pt

For a graph $G$ the expression $G \overset{v}{\rightarrow} (a_1, ..., a_s)$ means that for every $s$-coloring of the vertices of $G$ there exists $i \in \{1, ..., s\}$ such that there is a monochromatic $a_i$-clique of color $i$. The vertex Folkman numbers
$$F_v(a_1, ..., a_s; m - 1) = \min\{\vert V(G)\vert : G \overset{v}{\rightarrow} (a_1, ..., a_s) \mbox{ and } K_{m - 1} \not\subseteq G\}.$$
are considered, where $m = \sum_{i = 1}^{s}(a_i - 1) + 1$. We know the exact values of all the numbers $F_v(a_1, ..., a_s; m - 1)$ when $\max\{a_1, ..., a_s\} \leq 6$ and also the number $F_v(2, 2, 7; 8) = 20$. In \cite{BN15a} we present a method for obtaining lower bounds on these numbers. With the help of this method and a new improved algorithm, in the special case when $\max\{a_1, ..., a_s\} = 7$ we prove that $F_v(a_1, ..., a_s; m - 1) \geq m + 11$ and this bound is exact for all $m$. The known upper bound for these numbers is $m + 12$. At the end of the paper we also prove the lower bounds $19 \leq F_v(2, 2, 2, 4; 5)$ and $29 \leq F_v(7, 7; 8)$.

\smallskip

{\bf Keywords:} Folkman number, clique number, independence number, chromatic number.\\[2pt]
{\bf  2000 Math.\ Subject Classification:}  05C35
\par
}

\vspace*{16dd}

\section{Introduction}

Only finite, non-oriented graphs without loops and multiple edges are considered in this paper. $G_1 + G_2$ denotes the graph $G$ for which $\V(G) = \V(G_1) \cup \V(G_2)$ and $\E(G) = \E(G_1) \cup \E(G_2) \cup E'$, where $E' = \set{[x, y] : x \in \V(G_1), y \in \V(G_2)}$, i.e. $G$ is obtained by connecting with an edge every vertex of $G_1$ to every vertex of $G_2$. All undefined terms can be found in \cite{W01}. 

Let $a_1, ..., a_s$ be positive integers. The expression $G \arrowsv (a_1, ..., a_s)$ means that for every coloring of $\V(G)$ in $s$ colors ($s$-coloring) there exists $i \in \set{1, ..., s}$ such that there is a monochromatic $a_i$-clique of color $i$. In particular, $G \arrowsv (a_1)$ means that $\omega(G) \geq a_1$. Further, for convenience, instead of $G \arrowsv (\underbrace{2, ..., 2}_r)$ we write $G \arrowsv (2_r)$ and instead of $G \arrowsv (\underbrace{2, ..., 2}_r, a_1, ..., a_s)$ we write $G \arrowsv (2_r, a_1, ..., a_s)$.

Define:

$\mH(a_1, ..., a_s; q) = \set{ G : G \arrowsv (a_1, ..., a_s) \mbox{ and } \omega(G) < q }.$

$\mH(a_1, ..., a_s; q; n) = \set{ G : G \in \mH(a_1, ..., a_s; q) \mbox{ and } \abs{\V(G)} = n }.$

The vertex Folkman number $F_v(a_1, ..., a_s; q)$ is defined by the equality:
\begin{equation*}
F_v(a_1, ..., a_s; q) = \min\set{\abs{\V(G)} : G \in \mH(a_1, ..., a_s; q)}.
\end{equation*}

The graph $G$ is called an extremal graph in $\mH(a_1, ..., a_s; q)$ if $G \in \mH(a_1, ..., a_s;q )$ and $\abs{\V(G)} = F_v(a_1, ..., a_s; q)$. We denote by $\mH_{extr}(a_1, ..., a_s; q)$ the set of all extremal graphs in $\mH(a_1, ..., a_s; q)$.

Folkman proves in \cite{Fol70} that:
\begin{equation}
\label{equation: F_v(a_1, ..., a_s; q) exists}
F_v(a_1, ..., a_s; q) \mbox{ exists } \Leftrightarrow q > \max\set{a_1, ..., a_s}.
\end{equation}
Other proofs of (\ref{equation: F_v(a_1, ..., a_s; q) exists}) are given in \cite{DR08} and \cite{LRU01}. In the special case $s = 2$, a very simple proof of this result is given in \cite{Nen85} with the help of corona product of graphs.\\
Obviously $F_v(a_1, ..., a_s; q)$ is a symmetric function of $a_1, ..., a_s$, and if $a_i = 1$, then
\begin{equation*}
F_v(a_1, ..., a_s; q) = F_v(a_1, ..., a_{i-1}, a_{i+1}, ..., a_s; q).
\end{equation*}
Therefore, it is enough to consider only such Folkman numbers $F_v(a_1, ..., a_s; q)$ for which
\begin{equation}
\label{equation: 2 leq a_1 leq ... leq a_s}
2 \leq a_1 \leq ... \leq a_s.
\end{equation}
We call the numbers $F_v(a_1, ..., a_s; q)$ for which the inequalities (\ref{equation: 2 leq a_1 leq ... leq a_s}) hold canonical vertex Folkman numbers.\\
In \cite{LU96} for arbitrary positive integers $a_1, ..., a_s$ the following terms are defined
\begin{equation}
\label{equation: m and p}
m(a_1, ..., a_s) = m = \sum\limits_{i=1}^s (a_i - 1) + 1 \quad \mbox{ and } \quad p = \max\set{a_1, ..., a_s}.
\end{equation}
It is easy to see that $K_m \arrowsv (a_1, ..., a_s)$ and $K_{m - 1} \not\arrowsv (a_1, ..., a_s)$. Therefore
\begin{equation*}
F_v(a_1, ..., a_s; q) = m, \quad q \geq m + 1.
\end{equation*}
The following theorem for the numbers $F_v(a_1, ..., a_s; m)$ is true:
\begin{theorem}
\label{theorem: F_v(a_1, ..., a_s; m) = m + p}
Let $a_1, ..., a_s$ be positive integers and let $m$ and $p$ be defined by the equalities (\ref{equation: m and p}). If $m \geq p + 1$, then:

(a) $F_v(a_1, ..., a_s; m) = m + p$, \cite{LU96},\cite{LRU01}.

(b) $K_{m+p} - C_{2p + 1} = K_{m - p - 1} + \overline{C}_{2p + 1}$\\ is the only extremal graph in $\mH(a_1, ..., a_s; m)$, \ \cite{LRU01}.
\end{theorem}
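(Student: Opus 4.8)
The plan is to exhibit the graph $G_0:=K_{m-p-1}+\overline{C}_{2p+1}$ and check $G_0\in\mH(a_1,\dots,a_s;m)$; here $m-p-1\ge0$ by the hypothesis $m\ge p+1$, with $K_0$ the null graph. Clearly $\abs{\V(G_0)}=(m-p-1)+(2p+1)=m+p$, and since $\omega(\overline{C}_{2p+1})=\alpha(C_{2p+1})=p$ we get $\omega(G_0)=(m-p-1)+p=m-1$, so $K_m\not\subseteq G_0$. It remains to show $G_0\arrowsv(a_1,\dots,a_s)$; by symmetry assume $a_s=p=\max\{a_1,\dots,a_s\}$. Take an $s$-colouring of $\V(G_0)$ with no monochromatic $a_i$-clique of colour $i$. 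Let $q_i$ be the number of colour-$i$ vertices lying in the clique part $K_{m-p-1}$, and $r_i$ the largest size of a colour-$i$ independent set of $C_{2p+1}$; then the largest colour-$i$ clique of $G_0$ has size $q_i+r_i$, so $q_i+r_i\le a_i-1$ for all $i$. Summing and using $\sum_iq_i=m-p-1$ and $\sum_i(a_i-1)=m-1$ gives $\sum_ir_i\le p$. On the other hand the colour classes partition $\V(C_{2p+1})$; if at least two are nonempty, group them into two nonempty parts $A,B$, note each of $A$ and $B$ induces a disjoint union of arcs of the cycle, and get (using superadditivity of $\alpha$) $\sum_ir_i\ge\alpha_{C_{2p+1}}(A)+\alpha_{C_{2p+1}}(B)=\sum_j\lceil\ell_j/2\rceil\ge\lceil(2p+1)/2\rceil=p+1$, the sum running over all these arcs, whose lengths total $2p+1$ -- a contradiction. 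Hence a single colour $c$ fills $\overline{C}_{2p+1}$, so $r_c=\alpha(C_{2p+1})=p$ and $q_c\le a_c-1-p\le-1$, which is absurd. Thus $G_0\arrowsv(a_1,\dots,a_s)$ and $F_v(a_1,\dots,a_s;m)\le m+p$.

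\textbf{The lower bound.} Let $G\arrowsv(a_1,\dots,a_s)$ with $\omega(G)\le m-1$; I will show $\abs{\V(G)}\ge m+p$ by induction on $p$, carrying (b) along. First, $\chi(G)\ge m$: otherwise a proper colouring of $G$ with $m-1=\sum_i(a_i-1)$ colours can be grouped into $s$ bundles of sizes $a_1-1,\dots,a_s-1$, and recolouring every vertex by the index of its bundle gives an $s$-colouring in which colour $i$ is a union of $a_i-1$ independent sets -- contradicting $G\arrowsv(a_1,\dots,a_s)$. When $p=2$ all $a_i$ equal $2$, so $\arrowsv$ means exactly $\chi(G)\ge m$, and a short case analysis shows that a graph with $\chi\ge m$ and $\omega\le m-1$ on at most $m+1$ vertices must satisfy $\chi<m$ or contain $K_m$; hence $\abs{\V(G)}\ge m+2=m+p$. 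For $p\ge3$ the engine is the implication: for $a_j\ge3$, $G\arrowsv(a_1,\dots,a_s)$ implies $G\arrowsv(a_1,\dots,a_{j-1},a_j-1,a_{j+1},\dots,a_s,2)$, with $m$ unchanged; one proves it by taking a good colouring for the right-hand vector (its new colour-$2$ class is independent), recolouring that class with the colour of the coordinate $a_j-1$, and checking the resulting $s$-colouring is good for the left-hand vector. Reducing in this way every coordinate equal to $p$ yields a vector $\vec a'$ with $m(\vec a')=m$ and $\max\vec a'=p-1$, so by induction $\abs{\V(G)}\ge m+(p-1)$. If equality held, $G$ would be an extremal graph for $\vec a'$, hence $G\cong K_{m-p}+\overline{C}_{2p-1}$ by the inductive form of (b); but this graph has a good $s$-colouring for $(a_1,\dots,a_s)$ -- colour all of $\overline{C}_{2p-1}$ with a colour $c$ for which $a_c=p$ (note $\omega(\overline{C}_{2p-1})=p-1<p$), and colour $K_{m-p}$ with the colours $i\ne c$, taking exactly $a_i-1$ vertices of colour $i$ -- contradicting $G\arrowsv(a_1,\dots,a_s)$. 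Therefore $\abs{\V(G)}\ge m+p$.

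\textbf{Uniqueness, and where the difficulty lies.} For (b) every extremal $G$ must be identified with $K_{m-p-1}+\overline{C}_{2p+1}$. The plan is first to force $\alpha(G)=2$. Since deleting a largest independent set lowers $\chi$ by at most one, $\abs{\V(G)}\ge\chi(G)+\alpha(G)-1\ge m+\alpha(G)-1$, so $\alpha(G)\le p+1$; the equality case isolates the ``clique plus independent set'' graphs $K_{m-1}\cup I$, which I would rule out by producing a good $s$-colouring for them (equivalently, a $(p-1)$-clique-free set $S$ with $\chi(G-S)$ too small), while the range $3\le\alpha(G)\le p$ I would exclude by iterating the same inequality on $G-I$ together with the sharper bound $\chi(G-S)\ge m-p+1$ available for every $(p-1)$-clique-free $S$ (from $G\arrowsv(2_{m-p},p)$, which follows from the reduction above). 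Once $\alpha(G)=2$, so $\overline{G}$ is triangle-free with $\alpha(\overline{G})=\omega(G)\le m-1$ and $m+p$ vertices, the arrowing condition -- rephrased as: $\V(\overline{G})$ has no partition into parts $B_1,\dots,B_s$ with $\alpha(\overline{G}[B_i])\le a_i-1$ -- forces $\overline{G}$ to be an odd cycle $C_{2p+1}$ together with $m-p-1$ isolated vertices, i.e.\ $G\cong K_{m-p-1}+\overline{C}_{2p+1}$. The hard part throughout is precisely this rigidity: wringing the exact odd-cycle structure out of the colouring hypothesis, and in particular understanding how the independence numbers of odd cycles (and of disjoint unions of paths) behave under arbitrary vertex partitions -- this is what produces the sharp gap $p$ rather than the weak $2$ that the chromatic number alone would give.
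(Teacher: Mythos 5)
Your upper bound is correct and complete: the counting argument on the join $K_{m-p-1}+\overline{C}_{2p+1}$ (with $q_i+r_i\le a_i-1$, $\sum_i q_i=m-p-1$, and the arc-splitting estimate $\sum_i r_i\ge p+1$ when two colours meet $\overline{C}_{2p+1}$) is sound, as are your two auxiliary facts, namely $G\arrowsv(a_1,\dots,a_s)\Rightarrow\chi(G)\ge m$ and the reduction $G\arrowsv(a_1,\dots,a_s)\Rightarrow G\arrowsv(a_1,\dots,a_j-1,\dots,a_s,2)$ for $a_j\ge3$. (For what it is worth, the paper does not prove this theorem at all -- it quotes it from \cite{LU96} and \cite{LRU01}, mentioning further proofs in \cite{Nen00} and \cite{Nen01} -- so the comparison can only be with what a complete proof would require.)

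The genuine gap is that your argument for the lower bound in (a) is not self-contained: the induction is explicitly set up to ``carry (b) along'', and the step from $\abs{\V(G)}\ge m+(p-1)$ to $\abs{\V(G)}\ge m+p$ uses the uniqueness statement (b) at level $p-1$ to identify the hypothetical $(m+p-1)$-vertex graph with $K_{m-p}+\overline{C}_{2p-1}$. But (b) is never proved: your third paragraph is a plan, not a proof -- the exclusion of $\alpha(G)=p+1$ and of $3\le\alpha(G)\le p$ is left as ``I would rule out / I would exclude'', and the decisive rigidity step, that $\alpha(G)=2$ together with $\omega(G)\le m-1$ and the arrowing property forces $\overline{G}$ to be exactly $C_{2p+1}$ plus $m-p-1$ isolated vertices, is only asserted (you yourself flag it as the hard part). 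Moreover the base case of your induction needs (b) for $p=2$, i.e. that the only $(m+2)$-vertex graph with $\chi\ge m$ and $\omega\le m-1$ is $K_{m-3}+\overline{C}_5$; your case analysis establishes only the bound (a) on at most $m+1$ vertices, not this Dirac/Gallai-type uniqueness. So, as written, neither (b) nor the lower bound in (a) is established; to finish along your route you must actually prove the structural characterization you deferred (or decouple (a) from (b), which your present induction does not do).
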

The condition $m \geq p + 1$ is necessary according to (\ref{equation: F_v(a_1, ..., a_s; q) exists}). Other proofs of Theorem \ref{theorem: F_v(a_1, ..., a_s; m) = m + p} are given in \cite{Nen00} and \cite{Nen01}.\\

Very little is known about the numbers $F_v(a_1, ..., a_s; m - 1)$. According to (\ref{equation: F_v(a_1, ..., a_s; q) exists}) we have
\begin{equation}
\label{equation: F_v(a_1, ..., a_s; m - 1) exists}
F_v(a_1, ..., a_s; m - 1) \mbox{ exists } \Leftrightarrow m \geq p + 2.
\end{equation}

The following general bounds are known:
\begin{equation}
\label{equation: m + p + 2 leq F_v(a_1, ..., a_s; m - 1) leq m + 3p}
m + p + 2 \leq F_v(a_1, ..., a_s; m - 1) \leq m + 3p,
\end{equation}
where the lower bound is true if $p \geq 2$ and the upper bound is true if $p \geq 3$. The lower bound is obtained in \cite{Nen00} and the upper bound is obtained in \cite{KN06a}. In the border case $m = p + 2$ the upper bounds in (\ref{equation: m + p + 2 leq F_v(a_1, ..., a_s; m - 1) leq m + 3p}) are significantly improved in \cite{SXP09}.

We know all the numbers $F_v(a_1, ..., a_s; m - 1)$ when $\max\set{a_1, ..., a_s} \leq 6$, see \cite{BN17} for details. Regarding the numbers $F_v(a_1, ..., a_s; m - 1)$ when $\max\set{a_1, ..., a_s} = 7$ it is known that $F_v(2, 2, 7; 8) = 20$ \cite{BN17}, and 
$$m + 10 \leq F_v(a_1, ..., a_s; m - 1) \leq m + 12, \cite{BN17}.$$

The lower bound $F_v(2, 2, 7; 8) \geq 20$ is obtained with the help of Algorithm \ref{algorithm: mH_(max)(a_1, ..., a_s; q; n)}, and the upper bound is obtained by constructing 20-vertex graphs in $\mH(2, 2, 7; 8)$. An example for such a graph is given on Figure \ref{figure: H(2, 2, 7; 8; 20)}.

In this paper we present an algorithm (Algorithm \ref{algorithm: mH_(max)(a_1, ..., a_s; q; n), cone vertices}), with the help of which we can obtain lower bounds on the numbers $F_v(a_1, ..., a_s; m - 1)$. Using Algorithm \ref{algorithm: mH_(max)(a_1, ..., a_s; q; n), cone vertices} and $F_v(2, 2, 7; 8) = 20$ we improve the lower bound on the numbers $F_v(a_1, ..., a_s; m - 1)$ when $\max\set{a_1, ..., a_s} = 7$ by proving the following:
\begin{maintheorem}
\label{maintheorem: F_v(a_1, ..., a_s; m - 1) geq m + 11}
Let $a_1, ..., a_s$ be positive integers, such that $\max\set{a_1, ..., a_s} = 7$ and $m = \sum\limits_{i=1}^s (a_i - 1) + 1 \geq 9$. Then
$$F_v(a_1, ..., a_s; m - 1) \geq m + 11.$$
\end{maintheorem}

\begin{remark}
\label{remark: m geq 9}
According to (\ref{equation: F_v(a_1, ..., a_s; m - 1) exists}) the condition $m \geq 9$ in the Main Theorem is necessary.
\end{remark}

\begin{figure}[h]
	\centering
	\begin{subfigure}{\textwidth}
		\centering
		\includegraphics[trim={0 480 0 0},clip,height=220px,width=220px]{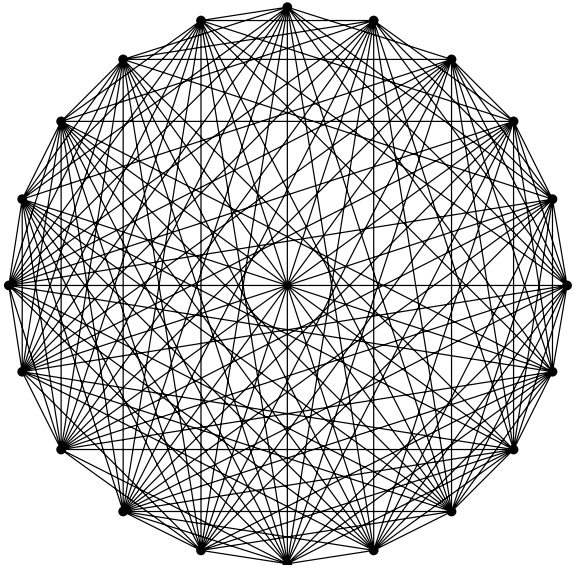}
	\end{subfigure}%
	\caption{An example of a 20-vertex graph in $\mH(2, 2, 7; 8)$ from \cite{BN17}}
	\label{figure: H(2, 2, 7; 8; 20)}
\end{figure}

\section{Bounds on the numbers \lowercase{\large{$\uppercase{F}_v(a_1, ..., a_s; q)$}}}

Let $m$ and $p$ be positive integers. Denote by $\mathcal{S}(m, p)$ the set of all $(b_1, ..., b_r)$ (r is not fixed), where $b_i$ are positive integers such that $\max\set{b_1, ..., b_r} = p$ and $\sum_{i = 1}^{r}(b_i - 1) + 1 = m$. Let $(a_1, ..., a_s) \in \mathcal{S}(m, p)$. Then obviously
$$\min_{(b_1, ..., b_r) \in \mathcal{S}(m, p)} F_v(b_1, ..., b_r; q) \leq F_v(a_1, ..., a_s; q) \leq \max_{(b_1, ..., b_r) \in \mathcal{S}(m, p)} F_v(b_1, ..., b_r; q).$$
Note that $(2_{m - p}, p) \in \mathcal{S}(m, p)$, $p \geq 2$ and it is easy to prove that
$$\min_{(b_1, ..., b_r) \in \mathcal{S}(m, p)} F_v(b_1, ..., b_r; q) = F_v(2_{m - p}, p; q) \mbox{ \cite{BN15a}}.$$
We see that the lower bounding of the vertex Folkman numbers can be achieved by computing or lower bounding the numbers $F_v(2_{m - p}, p; q)$. In general, this is a hard problem. However, in the case $q = m - 1$, in \cite{BN15a} we presented a method for the computation of these numbers, which is based on the following:

\begin{theorem}
\cite{BN15a}
\label{theorem: min_(r geq 2) set(F_v(2_r, p; r + p - 1) - r) = F_v(2_(r_0), p; r_0 + p - 1) - r_0}
Let $r_0(p) = r_0$ be the smallest positive integer for which
\begin{equation*}
\min_{r \geq 2} \set{F_v(2_r, p; r + p - 1) - r} = F_v(2_{r_0}, p; r_0 + p - 1) - r_0.
\end{equation*}
Then:

(a) $F_v(2_r, p; r + p - 1) = F(2_{r_0}, p; r_0 + p - 1) + r - r_0, \ r \geq r_0$.\\

(b) If $r_0 = 2$, then $F_v(2_r, p; r + p - 1) = F_v(2, 2, p; p + 1) + r - 2, \ r \geq 2$.\\

(c) If $r_0 > 2$ and $G$ is an extremal graph in $\mathcal{H} (2_{r_0}, p; r_0 + p - 1)$,\\ then $G \arrowsv (2, r_0 + p - 2)$.\\

(d) $r_0 < F_v(2, 2, p; p + 1) - 2p$.\\
\end{theorem}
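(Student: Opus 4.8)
\emph{Sketch of the intended argument.} Write $f(r)=F_v(2_r,p;r+p-1)$ and $g(r)=f(r)-r$ for integers $r\ge 2$ (and note $r_0\ge 2$, since $F_v(2,p;p)$ does not exist). The whole statement rests on two elementary facts. The first is the \emph{cone inequality} $f(r+1)\le f(r)+1$: given an extremal $G\in\mathcal{H}(2_r,p;r+p-1)$, the graph $K_1+G$ has clique number $\omega(G)+1\le r+p-1<(r+1)+p-1$, and for any colouring of $V(K_1+G)$ in the $r+2$ colours of the target $(2_{r+1},p)$ one gives the apex the colour playing the rôle of the ``extra'' $2$: if a vertex of $G$ repeats that colour we get a monochromatic $K_2$, otherwise $V(G)$ uses only the remaining $r+1$ colours and $G\arrowsv(2_r,p)$ supplies a clique; hence $K_1+G\arrowsv(2_{r+1},p)$. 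Thus $g$ is non-increasing, and by the lower bound in (\ref{equation: m + p + 2 leq F_v(a_1, ..., a_s; m - 1) leq m + 3p}) it is bounded below, hence eventually constant, so $r_0$ is well defined; for $r\ge r_0$ monotonicity and minimality give $g(r)=g(r_0)$, i.e. $f(r)=f(r_0)+r-r_0$, which is (a). Part (b) is the case $r_0=2$, using $f(2)=F_v(2,2,p;p+1)$.

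The second fact is a \emph{peeling lemma}: for any graph $H$ and any independent $A\subseteq V(H)$, $H\arrowsv(2_r,p)$ implies $H-A\arrowsv(2_{r-1},p)$ — colour $A$ with one fresh $2$-colour, which by independence carries no edge, so the clique forced by $H\arrowsv(2_r,p)$ lies inside $H-A$. For (c), assume $r_0>2$, let $G$ be extremal in $\mathcal{H}(2_{r_0},p;r_0+p-1)$, and suppose $G\not\arrowsv(2,r_0+p-2)$, witnessed by $V(G)=A\sqcup B$ with $A$ independent and $\omega(G[B])\le r_0+p-3$. If $A=\emptyset$ then $\omega(G)\le r_0+p-3$, so deleting a single vertex $v$ keeps $\omega\le r_0+p-3$ and $A=\{v\}$ is again a witness; thus we may take $A\ne\emptyset$. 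By the peeling lemma $G-A=G[B]\arrowsv(2_{r_0-1},p)$, and $\omega(G[B])\le r_0+p-3<(r_0-1)+p-1$, so $G-A\in\mathcal{H}(2_{r_0-1},p;(r_0-1)+p-1)$ with at most $f(r_0)-1$ vertices. Hence $f(r_0-1)\le f(r_0)-1$, i.e. $g(r_0-1)\le g(r_0)=\min g$, contradicting the minimality of $r_0$ (here $r_0-1\ge 2$).

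For (d): if $r_0=2$ the claim $2<F_v(2,2,p;p+1)-2p$ is immediate from (\ref{equation: m + p + 2 leq F_v(a_1, ..., a_s; m - 1) leq m + 3p}), which gives $F_v(2,2,p;p+1)\ge(p+2)+p+2=2p+4$. Assume $r_0>2$ and let $G$ be extremal in $\mathcal{H}(2_{r_0},p;r_0+p-1)$. By (c), $G\arrowsv(2,r_0+p-2)$, and since $\omega(G)\le r_0+p-2<r_0+p-1$ this puts $G\in\mathcal{H}(2,r_0+p-2;r_0+p-1)$. For the pair $(2,r_0+p-2)$ the quantity $m$ of (\ref{equation: m and p}) is $r_0+p-1$, which is exactly $1$ more than the largest coordinate $r_0+p-2$, so Theorem \ref{theorem: F_v(a_1, ..., a_s; m) = m + p} applies and gives $F_v(2,r_0+p-2;r_0+p-1)=2r_0+2p-3$ with unique extremal graph $\overline{C}_{2r_0+2p-3}$. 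But $\overline{C}_{2r_0+2p-3}$ does not arrow $(2_{r_0},p)$: its size-$2$ independent sets are exactly the cycle-consecutive pairs, so peeling off $r_0$ such pairs in succession from one end (each an independent pair of the current graph) leaves the complement of a path on $2p-3$ vertices, of clique number $p-1<p$; the peeling lemma, iterated $r_0$ times, would then force this leftover graph to have clique number $\ge p$, a contradiction. Hence $|V(G)|>2r_0+2p-3$, so $f(r_0)\ge 2r_0+2p-2$ and $g(r_0)\ge r_0+2p-2$. Since $r_0>2$ and $g$ is non-increasing, $g(2)\ge g(r_0-1)\ge g(r_0)+1\ge r_0+2p-1$, so $F_v(2,2,p;p+1)=g(2)+2\ge r_0+2p+1$, which gives $r_0<F_v(2,2,p;p+1)-2p$.

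The two lemmas are short once one is careful about which colour plays the rôle of the extra $2$ (this is precisely where independence is used). The points that will need genuine care are the degenerate case $A=\emptyset$ in (c) and, above all, extracting the exact constant in (d): obtaining $r_0<F_v(2,2,p;p+1)-2p$ rather than a bound that is weaker by one or two forces one to identify the unique extremal graph of $\mathcal{H}(2,r_0+p-2;r_0+p-1)$ via Theorem \ref{theorem: F_v(a_1, ..., a_s; m) = m + p} and to verify that it fails to arrow $(2_{r_0},p)$.
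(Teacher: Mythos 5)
The paper itself contains no proof of this theorem -- it is quoted from \cite{BN15a} -- so your argument can only be judged on its own terms. Its architecture is the right one and is essentially the method behind \cite{BN15a}: monotonicity of $g(r)=F_v(2_r,p;r+p-1)-r$ via adding a cone vertex, the peeling observation (Proposition \ref{proposition: G - A arrowsv (a_1, ..., a_(i - 1), a_i - 1, a_(i + 1_, ..., a_s)}) for (c), and for (d) the identification of the unique extremal graph $\overline{C}_{2(r_0+p-2)+1}$ via Theorem \ref{theorem: F_v(a_1, ..., a_s; m) = m + p}(b) together with the check that this graph does not arrow $(2_{r_0},p)$. Parts (a), (b), (c) and the bookkeeping in (d) -- including the degenerate witness $A=\emptyset$, the strictness step $g(r_0-1)\ge g(r_0)+1$ coming from the minimality of $r_0$, and the computation $\omega(\overline{P}_{2p-3})=p-1<p$ -- all check out.

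The one genuine gap is in your justification of the cone inequality $F_v(2_{r+1},p;r+p)\le F_v(2_r,p;r+p-1)+1$. The colouring of $K_1+G$ is adversarial, so you cannot ``give the apex the colour playing the r\^ole of the extra $2$''; your two branches only cover the case where the apex actually receives one of the $r+1$ colours with target $2$ (there, relabelling which $2$ is the extra one is legitimate). If the apex receives the colour with target $p$ and that colour also occurs on $G$, neither branch applies: a repetition of the apex's colour gives only a monochromatic edge in a colour whose target is $p$, which proves nothing. The fix is a short exchange argument: either some target-$2$ class contains an edge (done), or all $r+1$ of them are independent; then colour $G$ for the target $(2_r,p)$ by taking $r$ of these classes as the $2$-colours and putting the remaining independent class together with the $p$-coloured vertices of $G$ into the $p$-colour. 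Since $G\arrowsv(2_r,p)$, this union contains a $K_p$; it meets the independent class in at most one vertex, so at least a $K_{p-1}$ lies among the $p$-coloured vertices of $G$, and adjoining the apex gives the required monochromatic $K_p$. (Equivalently: recolour the ``extra'' $2$-class with the apex's colour and swap the apex for the at most one recoloured vertex of the resulting clique.) With this case supplied, the proposal is correct.
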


From this theorem it becomes clear, that for fixed $p$ the computation of the members of the infinite sequence $F_v(2_{m - p}, p; m - 1)$, $m \geq p + 2$, is reduced to the computation of its first $r_0$ members, where $r_0 < F_v(2, 2, p; p + 1) - 2p$. We conjecture that it is enough to know only its first member $F_v(2, 2, p; p + 1)$.

\begin{conjecture}
\cite{BN15a}
\label{conjecture: F_v(2_r, p; r + p - 1) = F_v(2, 2, p; p + 1) + r - 2}
If $p \geq 4$, then
$$\min_{r \geq 2} \set{F_v(2_r, p; r + p - 1) - r} = F_v(2, 2, p; p + 1) - 2,$$
i.e. $r_0(p) = 2$ and therefore
$$F_v(2_r, p; r + p - 1) = F_v(2, 2, p; p + 1) + r - 2, \quad r \geq 2.$$
\end{conjecture}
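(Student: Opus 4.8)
The plan is to reduce the conjecture, via the theorem of \cite{BN15a} quoted above, to a single structural property of the extremal graphs, which I expect to be the real obstacle. Write $a_r = F_v(2_r, p; r+p-1) - r$. By part (b) of Theorem~\ref{theorem: min_(r geq 2) set(F_v(2_r, p; r + p - 1) - r) = F_v(2_(r_0), p; r_0 + p - 1) - r_0} (together with the fact that $r_0(p)$ attains $\min_{r\ge 2} a_r$), it suffices to prove that $r_0(p) = 2$. I would argue by contradiction, assuming $r_0 := r_0(p) \ge 3$.

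The first ingredient is an elementary reduction lemma: \emph{if $G \in \mathcal{H}(2_r, p; r+p-1)$ with $r \ge 3$ and some vertex $v$ lies in every maximum clique of $G$, then $G - v \in \mathcal{H}(2_{r-1}, p; (r-1)+p-1)$.} For the clique bound, deleting $v$ destroys every maximum clique, so $\omega(G-v) = \omega(G) - 1 \le (r+p-2) - 1 < (r-1)+p-1$. For the arrowing, suppose $c'$ were a good $(2_{r-1}, p)$-coloring of $G - v$, i.e.\ one in which the color classes $1, \dots, r-1$ are independent and class $r$ induces a $K_p$-free graph. Recolor every vertex of class $r$ with a fresh color $r+1$ and give $v$ the color $r$. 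Then $v$ is the unique vertex of $G$ carrying color $r$, so there is no monochromatic color-$r$ edge, while the classes $1, \dots, r-1$ and $r+1$ induce in $G$ exactly the subgraphs they induced under $c'$ in $G - v$, hence stay independent, resp.\ $K_p$-free. This is a good $(2_r, p)$-coloring of $G$, contradicting $G \arrowsv (2_r, p)$.

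Now let $G$ be extremal in $\mathcal{H}(2_{r_0}, p; r_0+p-1)$. Since $r_0 > 2$, part (c) of the cited theorem gives $G \arrowsv (2, r_0+p-2)$. Coloring all of $\V(G)$ with the second color forces a $K_{r_0+p-2}$, so $\omega(G) \ge r_0+p-2$; together with $\omega(G) < r_0+p-1$ this gives $\omega(G) = r_0+p-2$. Coloring an arbitrary independent set $I$ with the first color and $\V(G) \setminus I$ with the second forces a $K_{r_0+p-2}$ inside $G - I$, so $G - I$ still contains a maximum clique of $G$; in other words, \emph{the maximum cliques of $G$ admit no independent transversal}. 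The crux is now the structural claim that \emph{some vertex lies in every maximum clique of $G$}. Granting it, the reduction lemma supplies a graph in $\mathcal{H}(2_{r_0-1}, p; (r_0-1)+p-1)$ on $\abs{\V(G)} - 1 = F_v(2_{r_0}, p; r_0+p-1) - 1$ vertices, so $a_{r_0-1} \le a_{r_0} = \min_{r\ge 2} a_r$; but $2 \le r_0 - 1 < r_0$, so $r_0 - 1$ also attains the minimum, contradicting the choice of $r_0$ as the \emph{smallest} index that does. Hence $r_0 \le 2$, i.e.\ $r_0 = 2$.

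So everything rests on the structural claim, and that is where I expect the difficulty. It does not follow from ``no independent transversal'' alone: for instance $C_5 \in \mathcal{H}(2, 2; 3)$, its maximum cliques (the edges) have no independent transversal, yet no vertex lies in all of them. Thus the proof must genuinely use $p \ge 4$ --- which makes $\omega(G) = r_0+p-2 \ge 5$, so that the maximum cliques are rigid --- together with the vertex-criticality of the extremal $G$: for every $v$ one has $\omega(G - v) \le \omega(G) < r_0+p-1$, hence by extremality $G - v \not\arrowsv (2_{r_0}, p)$, i.e.\ $G - v$ admits a good $(2_{r_0}, p)$-coloring, but no such coloring of $G - v$ extends to one of $G$. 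I would try to show that if the maximum cliques of $G$ had empty intersection, one could either assemble a good $(2_{r_0}, p)$-coloring of $G$ outright, or exhibit a proper induced subgraph of $G$ still lying in $\mathcal{H}(2_{r_0}, p; r_0+p-1)$ --- in either case a contradiction. For each fixed $p$ this is a finite problem, since part (d) bounds $r_0$ by $F_v(2, 2, p; p+1) - 2p$ and hence bounds $\abs{\V(G)}$, so the conjecture for that $p$ is in principle decidable by the kind of exhaustive generation that underlies the Main Theorem; what is missing --- and what keeps the statement a conjecture --- is a proof of the structural claim uniform in $p \ge 4$, or a reduction subtler than deleting a single vertex common to all maximum cliques.
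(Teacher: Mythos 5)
You are attempting to prove a statement that the paper itself records only as a conjecture quoted from \cite{BN15a}: the paper does not prove it for general $p$, but verifies the single case $p=7$ (Theorem \ref{theorem: F_v(2_(m - 7), 7; m - 1) = m + 11}), the cases $p=4,5,6$ being cited from \cite{Nen02b}, \cite{BN15a} and \cite{BN17}. Your proposal does not prove it either, and you concede this yourself. The steps you do carry out are correct: the deletion lemma (if $v$ lies in every maximum clique of $G\in\mH(2_r,p;r+p-1)$, $r\ge 3$, then $G-v\in\mH(2_{r-1},p;r+p-2)$), the deduction $\omega(G)=r_0+p-2$ from part (c) of Theorem \ref{theorem: min_(r geq 2) set(F_v(2_r, p; r + p - 1) - r) = F_v(2_(r_0), p; r_0 + p - 1) - r_0}, the observation that no independent set can meet all maximum cliques of an extremal $G$, and the minimality contradiction that would follow. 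But all of this only repackages the conjecture into the unproved structural claim that a hypothetical extremal graph $G\in\mH_{extr}(2_{r_0},p;r_0+p-1)$ with $r_0\ge 3$ has a vertex common to all of its maximum cliques. That claim is the entire difficulty: as your own $C_5$ example shows, it does not follow from what you establish, no proof of it is known, and nothing in the paper supplies one. So the gap is genuine --- the proposal is an honest reduction of one open statement to another, not a proof.

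It is also worth contrasting your structural plan with how the conjecture has actually been settled in the known cases, since that is instructive about where the difficulty sits. For $p=7$ the paper uses part (d) of the theorem cited above, $r_0(7)<F_v(2,2,7;8)-14=6$, and then eliminates $r=3,4,5$ by proving $F_v(2,2,2,7;9)>20$, $F_v(2_4,7;10)>21$ and $F_v(2_5,7;11)>22$ via exhaustive computer generation of the relevant maximal $K_q$-free graphs, with the independence number restricted by Nenov's bound from \cite{Nen02a} and the search made feasible by the cone-vertex refinement (Algorithm \ref{algorithm: mH_(max)(a_1, ..., a_s; q; n), cone vertices}); \cite{BN17} also gives the general sufficient criterion $F_v(2,2,p;p+1)\le 2p+5$. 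Your closing remark that each fixed $p$ is ``in principle decidable'' is true but should be tempered: the reduction through part (d) presupposes knowledge of $F_v(2,2,p;p+1)$, which is unknown for every $p\ge 8$, and the enumeration at $p=7$ was already near the practical limit. What would make your approach a genuine contribution is precisely a proof of the common-vertex claim uniform in $p\ge 4$ (or some other mechanism excluding $r_0\ge 3$ without exhaustive generation); without it, the argument stops exactly where the conjecture starts.
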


This conjecture is proved for $p = 4, 5 \mbox{ and } 6$ in \cite{Nen02b}, \cite{BN15a} and \cite{BN17} respectively. In \cite{BN17} it is also proved that the conjecture is true when $F_v(2, 2, p; p + 1) \leq 2p + 5$. In this paper we will prove, that Conjecture \ref{conjecture: F_v(2_r, p; r + p - 1) = F_v(2, 2, p; p + 1) + r - 2} is also true when $p = 7$:

\begin{theorem}
\label{theorem: F_v(2_(m - 7), 7; m - 1) = m + 11}
$F_v(2_{m - 7}, 7; m - 1) = m + 11$.
\end{theorem}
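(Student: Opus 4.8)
\emph{The plan} is to recognize the statement as the case $p=7$ of Conjecture \ref{conjecture: F_v(2_r, p; r + p - 1) = F_v(2, 2, p; p + 1) + r - 2} and to reduce it to a finite computation. Writing $r=m-7$, so that $r+p-1=m-1$ and $F_v(2_{m-7},7;m-1)=F_v(2_r,7;r+6)$, part (b) of Theorem \ref{theorem: min_(r geq 2) set(F_v(2_r, p; r + p - 1) - r) = F_v(2_(r_0), p; r_0 + p - 1) - r_0} shows that once $r_0(7)=2$ is established we obtain
$$F_v(2_r,7;r+6)=F_v(2,2,7;8)+r-2=r+18=(m-7)+18=m+11,$$
using the known value $F_v(2,2,7;8)=20$. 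Hence everything reduces to proving
$$\min_{r\ge2}\set{F_v(2_r,7;r+6)-r}=F_v(2,2,7;8)-2=18,$$
with the minimum attained at $r=2$.

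\emph{Next} I would localize the minimum. Its value at $r=2$ is $20-2=18$, so the minimum is at most $18$, and what remains is the reverse inequality $F_v(2_r,7;r+6)\ge r+18$ for all $r\ge2$. By part (d) of Theorem \ref{theorem: min_(r geq 2) set(F_v(2_r, p; r + p - 1) - r) = F_v(2_(r_0), p; r_0 + p - 1) - r_0} we have $r_0(7)<F_v(2,2,7;8)-2\cdot7=6$, hence $r_0\in\set{2,3,4,5}$; and by part (a) the quantity $F_v(2_r,7;r+6)-r$ is constant, equal to the minimum, for all $r\ge r_0$. Therefore the minimum over $r\ge2$ equals the minimum over $r\in\set{2,3,4,5}$, and it suffices to verify the three inequalities
$$F_v(2_3,7;9)\ge21,\qquad F_v(2_4,7;10)\ge22,\qquad F_v(2_5,7;11)\ge23.$$

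\emph{The computational core.} The general lower bound (\ref{equation: m + p + 2 leq F_v(a_1, ..., a_s; m - 1) leq m + 3p}) already gives $F_v(2_r,7;r+6)\ge(r+7)+7+2=r+16$, so each of the three inequalities is equivalent to excluding graphs on exactly $r+16$ and $r+17$ vertices; that is, to proving
$$\mH(2_r,7;r+6;n)=\emptyset\quad\text{for }(r,n)\in\set{(3,19),(3,20),(4,20),(4,21),(5,21),(5,22)}.$$
I would settle these six emptiness statements with Algorithm \ref{algorithm: mH_(max)(a_1, ..., a_s; q; n), cone vertices}, using part (c) of Theorem \ref{theorem: min_(r geq 2) set(F_v(2_r, p; r + p - 1) - r) = F_v(2_(r_0), p; r_0 + p - 1) - r_0} to prune: were the minimum attained at some $r_0\in\set{3,4,5}$, every extremal graph would also satisfy $G\arrowsv(2,r_0+5)$ while keeping $\omega(G)\le r_0+5$, a constraint that forces dominating (cone) vertices and lets the candidates be generated isomorph-free from smaller graphs.

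\emph{The main obstacle} will be the largest instance, $r=5$ and $n=22$: the family of graphs on $22$ vertices with $\omega\le10$ carrying both arrow relations is by far the biggest, and the whole argument hinges on keeping that enumeration feasible through the cone-vertex reduction and the $(2,r+5)$-arrow filter. If $r_0<6$ turns out too weak to tame this case, I would first sharpen the localization—improving (d) with the already-computed values at $r=3,4$—to shrink the range of $r$ that must be searched exhaustively.
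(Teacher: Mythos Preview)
Your high-level reduction is exactly the paper's: recognize the statement as Conjecture~\ref{conjecture: F_v(2_r, p; r + p - 1) = F_v(2, 2, p; p + 1) + r - 2} for $p=7$, use $F_v(2,2,7;8)=20$ together with Theorem~\ref{theorem: min_(r geq 2) set(F_v(2_r, p; r + p - 1) - r) = F_v(2_(r_0), p; r_0 + p - 1) - r_0}(d) to localize $r_0\in\{2,3,4,5\}$, and then rule out $r_0\in\{3,4,5\}$ by showing the corresponding $\mH$-sets are empty via Algorithm~\ref{algorithm: mH_(max)(a_1, ..., a_s; q; n), cone vertices}. Two minor inefficiencies: the paper cites the sharper bound $F_v(a_1,\dots,a_s;m-1)\ge m+10$ from \cite{BN17} (so only $n=20,21,22$ must be emptied, not your six pairs), and it observes that in fact only the last inequality $F_v(2_5,7;11)>22$ is needed.

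Where your plan departs from the paper is in how the search is made feasible, and there your reasoning has a gap. You invoke Theorem~\ref{theorem: min_(r geq 2) set(F_v(2_r, p; r + p - 1) - r) = F_v(2_(r_0), p; r_0 + p - 1) - r_0}(c) and claim the extra arrow $G\arrowsv(2,r_0+5)$ ``forces dominating (cone) vertices.'' It does not: that condition merely says every independent set misses a $K_{r_0+5}$, which neither produces cone vertices nor obviously restricts the search. The paper does not use (c) at all. What actually tames the enumeration is Nenov's bound \cite{Nen02a}: if $G\in\mH(a_1,\dots,a_s;m-1;n)$ with $n<m+3p$ then $\alpha(G)<n-m-p+1$, so here $\alpha(G)\le 3$. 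This is the crucial ingredient you are missing; it reduces everything to enumerating maximal $K_q$-free graphs with $\alpha\in\{2,3\}$. The cone-vertex device in Algorithm~\ref{algorithm: mH_(max)(a_1, ..., a_s; q; n), cone vertices} is orthogonal: it is a bookkeeping optimization that lets each level reuse the already-computed set $\mathcal{A}_2=\mH_{max}^t(a_1-1,\dots;q-1;n-1)$ from the previous inequality, which is why the paper proves the three inequalities \emph{successively} rather than independently. Your proposal does not set up this recursive data flow, so as written the $r=5$, $n=22$ case would not be feasible.
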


The Main Theorem follows easily from Theorem \ref{theorem: F_v(2_(m - 7), 7; m - 1) = m + 11}.

\begin{remark}
\label{remark: method for upper bound}
This method is not suitable for upper bounding the vertex Folkman numbers, since it is not clear how $\max_{(b_1, ..., b_r) \in \mathcal{S}(m, p)} F_v(b_1, ..., b_r; q)$ is computed or bounded. In \cite{BN15b} we present another method for upper bounding the vertex Folkman numbers (see also \cite{BN15a} and \cite{BN17}).
\end{remark}

\section{Algorithms}

Finding all graphs in $\mH(a_1, ..., a_s; q; n)$ using a brute force approach is practically impossible for $n > 13$. In this section we present algorithms for obtaining these graphs.

We say that $G$ is a maximal graph in $\mH(a_1, ..., a_s; q)$ if $G \in \mH(a_1, ..., a_s; q)$ but $G + e \not\in \mH(a_1, ..., a_s; q), \forall e \in \E(\overline{G})$, i.e. $\omega(G + e) = q, \forall e \in \E(\overline{G})$. The graphs in $\mH(a_1, ..., a_s; q)$ can be obtained by removing edges from the maximal graphs in this set.

For convenience, we also define the following term:
\begin{definition}
	\label{definition: (+K_t)}
	The graph $G$ is called a $(+K_t)$-graph if $G + e$ contains a new $t$-clique for all $e \in \E(\overline{G})$.
\end{definition}
Obviously, $G \in \mH(a_1, ..., a_s; q)$ is a maximal graph in $\mH(a_1, ..., a_s; q)$ if and only if $G$ is a $(+K_q)$-graph. We shall denote by $\mH_{+K_t}(a_1, ..., a_s; q)$ the set of all $(+K_t)$-graphs in $\mH(a_1, ..., a_s; q)$, and by $\mH_{max}(a_1, ..., a_s; q)$ all maximal $K_q$-free graphs in this set. The sets $\mH_{max}(a_1, ..., a_s; q; n)$ and $\mH_{+K_t}(a_1, ..., a_s; q; n)$ are defined in the same way as $\mH(a_1, ..., a_s; q; n)$.

We shall denote by $\mH_{max}^t(a_1, ..., a_s; q; n)$ and $\mH_{+K_t}^t(a_1, ..., a_s; q; n)$ the subsets of all graphs with independence number not greater than t in the sets $\mH_{max}(a_1, ..., a_s; q; n)$ and $\mH_{+K_t}(a_1, ..., a_s; q; n)$ respectively.

\begin{remark}
\label{remark: mathcal(H)(a_1; q; n) = ...}
In the special case $s = 1$ we have
	
$\mH(a_1; q; n) = \set{ G : a_1 \leq \omega(G) < q \mbox{ and } \abs{\V(G)} = n }$.
	
Obviously, if $a_1 \leq n \leq q - 1$ then $\mH_{max}(a_1; q; n) = \set{K_n}$.

If $a_1 \leq q - 1 \leq n$, then $\mH_{max}(a_1; q; n) = \mH_{max}(q - 1; q; n).$
\end{remark}

Further, we will use the following propositions, which are easy to prove:
\begin{proposition}
\label{proposition: G - A arrowsv (a_1, ..., a_(i - 1), a_i - 1, a_(i + 1_, ..., a_s)}
\cite{BN17}
Let $G$ be a graph, $G \arrowsv (a_1, ..., a_s)$ and $a_i \geq 2$. Then for every independent set $A$ in $G$
\begin{equation*}
G - A \arrowsv (a_1, ..., a_{i - 1}, a_i - 1, a_{i + 1}, ..., a_s).
\end{equation*}
\end{proposition}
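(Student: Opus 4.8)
The plan is to argue by lifting colorings from $G - A$ to $G$. Let $A$ be an independent set in $G$ and let $\chi$ be an arbitrary $s$-coloring of $\V(G - A)$. First I would extend $\chi$ to an $s$-coloring $\chi'$ of $\V(G)$ by assigning color $i$ to every vertex of $A$. Since $G \arrowsv (a_1, \ldots, a_s)$, the coloring $\chi'$ must produce a monochromatic clique $Q$ of size $a_j$ in some color $j \in \set{1, \ldots, s}$.

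Next I would split into two cases according to whether $j = i$. If $j \neq i$, then no vertex of $A$ received color $j$ under $\chi'$, so $Q \subseteq \V(G - A)$ and $Q$ is already a monochromatic $a_j$-clique of color $j$ for the original coloring $\chi$. If $j = i$, this is where the independence of $A$ is used: $Q$ is a clique, hence it meets the independent set $A$ in at most one vertex, so $Q \setminus A$ is a clique of size at least $a_i - 1$, it is contained in $\V(G - A)$, and all of its vertices have color $i$ under $\chi$. In either case $\chi$ yields the required monochromatic structure, and since $\chi$ was arbitrary this gives $G - A \arrowsv (a_1, \ldots, a_{i - 1}, a_i - 1, a_{i + 1}, \ldots, a_s)$.

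There is essentially no obstacle here. The hypothesis $a_i \geq 2$ is used only to guarantee that $a_i - 1$ is a positive integer, so that the reduced tuple makes sense, and the independence of $A$ is used only through the elementary fact that a clique and an independent set share at most one vertex. The degenerate case $A = \emptyset$ is also covered: it reduces to the trivial monotonicity $G \arrowsv (a_1, \ldots, a_s) \Rightarrow G \arrowsv (a_1, \ldots, a_i - 1, \ldots, a_s)$, since every $a_i$-clique contains an $(a_i - 1)$-clique.
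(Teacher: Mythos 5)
Your argument is correct: extending an arbitrary coloring of $G-A$ by giving all of $A$ color $i$, and then using that a clique meets the independent set $A$ in at most one vertex, is exactly the standard way to prove this, and all cases (including $j\neq i$ and $A=\emptyset$) are handled properly. The paper itself offers no proof — it cites \cite{BN17} and calls the statement easy — so there is nothing to diverge from; your lifting argument is the natural one.
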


\begin{proposition}
\label{proposition: G - A in mH_(+K_(q-1))(a_1 - 1, ..., a_s; q; n - abs(A))}
\cite{BN17}
Let $G \in \mH_{max}(a_1, ..., a_s; q; n)$ and $A$ be an independent set of vertices of $G$. Then $G - A \in \mH_{+K_{q-1}}(a_1 - 1, ..., a_s; q; n - \abs{A})$.
\end{proposition}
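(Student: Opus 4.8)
The plan is to verify directly the four conditions that define membership of $G - A$ in $\mH_{+K_{q-1}}(a_1 - 1, a_2, \ldots, a_s; q; n - \abs{A})$: the vertex count, $K_q$-freeness, the arrowing relation $\arrowsv (a_1 - 1, a_2, \ldots, a_s)$, and the $(+K_{q-1})$ property. The first three are immediate or may be quoted, and all the content lies in the last one.

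First, $\abs{\V(G - A)} = n - \abs{A}$ by the definition of vertex deletion, and since $G - A$ is an induced subgraph of $G$ we have $\omega(G - A) \le \omega(G) < q$, so $K_q \not\subseteq G - A$. Next, because $a_1 \ge 2$ and $A$ is an independent set of $G$, Proposition \ref{proposition: G - A arrowsv (a_1, ..., a_(i - 1), a_i - 1, a_(i + 1_, ..., a_s)} applied with $i = 1$ gives $G - A \arrowsv (a_1 - 1, a_2, \ldots, a_s)$.

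It remains to show that $G - A$ is a $(+K_{q-1})$-graph. Fix $e = [x, y] \in \E(\overline{G - A})$; I must produce in $(G - A) + e$ a $(q - 1)$-clique containing $e$, which is then automatically a new clique since $e \notin \E(G - A)$. Because $x, y \notin A$, the pair $\{x, y\}$ is also a non-edge of $G$, i.e. $e \in \E(\overline{G})$. As $G \in \mH_{max}(a_1, \ldots, a_s; q; n)$ is a $(+K_q)$-graph, $G + e$ contains a $q$-clique $Q$ with $x, y \in Q$, and $Q$ genuinely uses the edge $e$ because $G$ is $K_q$-free. Consequently $Q \setminus \{x\}$ is a $(q - 1)$-clique of $G$ (the only pair inside $Q$ not joined in $G$ is $\{x, y\}$), so $\abs{Q \cap A} = \abs{(Q \setminus \{x\}) \cap A} \le 1$ by independence of $A$, and the vertex of $Q \cap A$, if it exists, is different from $x$ and $y$. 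Set $Q' = Q \setminus A \subseteq \V(G - A)$; then $x, y \in Q'$, the set $Q'$ is a clique of $(G - A) + e$, and $\abs{Q'} \in \{q - 1, q\}$. If $\abs{Q'} = q - 1$ we are done; if $\abs{Q'} = q$, delete from $Q'$ one vertex other than $x$ or $y$ (possible since $q \ge 3$) to obtain the desired $(q - 1)$-clique through $e$. As $e$ was arbitrary, $G - A$ is a $(+K_{q-1})$-graph.

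The whole argument hinges on that last paragraph, and the single delicate point is the case distinction on $\abs{Q \cap A}$; it is handled by the observation that deleting either endpoint of $e$ from $Q$ leaves a clique of $G$, which therefore meets the independent set $A$ at most once. I would also record that the statement tacitly assumes $a_1 \ge 2$ and $q \ge 3$ — both automatic whenever $\mH_{max}(a_1, \ldots, a_s; q)$ is nonempty — so nothing is lost.
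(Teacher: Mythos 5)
Your proof is correct. The paper itself states this proposition without proof, citing \cite{BN17} and calling it easy to prove, so there is no in-text argument to compare against; your verification is the natural one, and you handle the only delicate point properly --- namely that any $q$-clique of $G + e$ created by a non-edge $e$ of $G - A$ must use $e$ (since $G$ is $K_q$-free) and meets the independent set $A$ in at most one vertex, necessarily distinct from the endpoints of $e$, so deleting $A$ (and, if need be, one further vertex, using $q \geq 3$) leaves a new $(q-1)$-clique through $e$.
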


In \cite{BN17} we present the following algorithm for finding all graphs $G \in \mH_{max}(a_1, ..., a_s; q; n)$ with $r \leq \alpha(G) \leq t$: 

\begin{algorithm}
	\cite{BN17}
	\label{algorithm: mH_(max)(a_1, ..., a_s; q; n)} The input of the algorithm is the set $\mathcal{A} = \mH_{max}^t(a_1 - 1, ..., a_s; q; n - r)$. The output of the algorithm is the set $\mathcal{B}$ of all graphs $G \in \mH_{max}^t(a_1, ..., a_s; q; n)$ with $\alpha(G) \geq r$.
	
	\emph{1.} By removing edges from the graphs in $\mathcal{A}$ obtain the set
	
	$\mathcal{A}' = \mH_{+K_{q - 1}}^t(a_1 - 1, ..., a_s; q; n - r)$.
	
	\emph{2.} For each graph $H \in \mathcal{A}'$:
	
	\emph{2.1.} Find the family $\mathcal{M}(H) = \set{M_1, ..., M_l}$ of all maximal $K_{q - 1}$-free subsets of $\V(H)$.
	
	\emph{2.2.} Find all $r$-element multisets $N = \set{M_{i_1}, M_{i_2}, ..., M_{i_r}}$ of elements of $\mathcal{M}(H)$, which fulfill the conditions:
	
	(a) $K_{q - 2} \subseteq M_{i_j} \cap M_{i_k}$ for every $M_{i_j}, M_{i_k} \in N$.
	
	(b) $\alpha(H - \bigcup_{M_{i_j} \in N'} M_{i_j}) \leq t - \abs{N'}$ for subtuple $N'$ of $N$.
	
	\emph{2.3.} For each $r$-element multiset $N = \set{M_{i_1}, M_{i_2}, ..., M_{i_r}}$ of elements of $\mathcal{M}(H)$ found in step 2.2 construct the graph $G = G(N)$ by adding new independent vertices $v_1, v_2, ..., v_r$ to $\V(H)$ such that $N_G(v_j) = M_{i_j}, j = 1, ..., r$. If $\omega(G + e) = q, \forall e \in \E(\overline{G})$, then add $G$ to $\mathcal{B}$.
	
	\emph{3.} Remove the isomorph copies of graphs from $\mathcal{B}$.

	\emph{4.} Remove from the obtained in step 3 set $\mathcal{B}$ all graphs $G$ for which $G \not\arrowsv (a_1, ..., a_s)$.
	
\end{algorithm}

\begin{theorem}
	\label{theorem: algorithm mH_(max)(a_1, ..., a_s; q; n)}
	\cite{BN17}
	After the execution of Algorithm \ref{algorithm: mH_(max)(a_1, ..., a_s; q; n)}, the obtained set $\mathcal{B}$ coincides with the set of all graphs $G \in \mH_{max}^t(a_1, ..., a_s; q; n)$ with $\alpha(G) \geq r$.
\end{theorem}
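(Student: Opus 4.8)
The plan is to establish the two inclusions separately: every graph placed in $\mathcal{B}$ belongs to $\mH_{max}^t(a_1,\dots,a_s;q;n)$ and has independence number at least $r$ (soundness), and conversely every such graph is produced (completeness). For soundness, take a graph $G = G(N)$ that passes the test in step 2.3 and is not discarded in steps 3--4. It arises from some $H \in \mathcal{A}'$ by attaching pairwise non-adjacent vertices $v_1,\dots,v_r$ with $N_G(v_j) = M_{i_j} \in \mathcal{M}(H)$, so $\abs{\V(G)} = (n-r)+r = n$. Each clique of $G$ contains at most one $v_j$; a clique through $v_j$ is $\set{v_j}$ together with a clique of $H[M_{i_j}]$, which has at most $q-2$ vertices since $M_{i_j}$ is $K_{q-1}$-free, and a clique avoiding all the $v_j$ lies inside $H$, which is $K_q$-free; hence $\omega(G) < q$. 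The test in step 2.3 certifies that $G$ is a $(+K_q)$-graph, and $G$ survives step 4 only if $G \arrowsv (a_1,\dots,a_s)$; together with $\omega(G) < q$ this makes $G$ a maximal graph in $\mH(a_1,\dots,a_s;q)$, i.e.\ $G \in \mH_{max}(a_1,\dots,a_s;q;n)$. Finally $\alpha(G) \ge r$ because $\set{v_1,\dots,v_r}$ is independent, and $\alpha(G) \le t$: an independent set of $G$ decomposes as $\set{v_j : j \in J}$ together with an independent set of $H$ disjoint from $\bigcup_{j\in J}M_{i_j}$, which by condition 2.2(b), applied to the corresponding sub-tuple, has at most $t - \abs{J}$ vertices. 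Hence $G \in \mH_{max}^t(a_1,\dots,a_s;q;n)$ with $\alpha(G) \ge r$.

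For completeness, fix $G \in \mH_{max}^t(a_1,\dots,a_s;q;n)$ with $\alpha(G) \ge r$, choose an independent set $A = \set{v_1,\dots,v_r}$ of size $r$ in $G$, and set $H := G - A$. By Proposition~\ref{proposition: G - A in mH_(+K_(q-1))(a_1 - 1, ..., a_s; q; n - abs(A))}, $H \in \mH_{+K_{q-1}}(a_1-1,\dots,a_s;q;n-r)$, and $\alpha(H) \le \alpha(G) \le t$, so $H \in \mathcal{A}' = \mH_{+K_{q-1}}^t(a_1-1,\dots,a_s;q;n-r)$ and is therefore among the graphs processed in step 2. (That step 1 indeed produces all of $\mathcal{A}'$ follows by greedily adding edges to any member of $\mH_{+K_{q-1}}^t$ while keeping it $K_q$-free, which terminates in a maximal $K_q$-free graph of $\mathcal{A} = \mH_{max}^t(a_1-1,\dots,a_s;q;n-r)$ from which the original graph is recovered by edge deletions.) The crucial point is that each $M_j := N_G(v_j) \subseteq \V(H)$ is a \emph{maximal} $K_{q-1}$-free subset of $\V(H)$, hence lies in $\mathcal{M}(H)$: it is $K_{q-1}$-free, since otherwise a $(q-1)$-clique of $H[M_j]$ together with $v_j$ would be a $q$-clique of $G$; and it is maximal, because if some $u \in \V(H)\setminus M_j$ had $M_j \cup \set{u}$ still $K_{q-1}$-free, then $[v_j,u] \in \E(\overline{G})$, and $G$ being a $(+K_q)$-graph forces $G + [v_j,u]$ to contain a $q$-clique; that clique must use the new edge, hence equals $\set{v_j,u}\cup K$ for a $(q-2)$-clique $K \subseteq N_G(v_j)\cap N_G(u) \subseteq M_j$, and then $K \cup \set{u}$ is a $(q-1)$-clique inside $M_j \cup \set{u}$, a contradiction.

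Next I would check that the multiset $N := \set{M_1,\dots,M_r}$ satisfies conditions 2.2(a) and 2.2(b), so that $N$ is among the multisets enumerated in step 2.2. Condition (a) holds since for $j \ne k$ the non-edge $[v_j,v_k]$ yields, by the same reasoning, a $q$-clique $\set{v_j,v_k}\cup K$ in $G + [v_j,v_k]$ with $K$ a $(q-2)$-clique of $M_j \cap M_k$; condition (b) holds since a maximum independent set of $H - \bigcup_{j\in J}M_j$, together with $\set{v_j : j \in J}$, is an independent set of $G$ and so has at most $\alpha(G) \le t$ vertices. Consequently the graph $G(N)$ assembled in step 2.3 is isomorphic to $G$ --- for $G$ is precisely $H$ with $r$ independent vertices whose neighborhoods are $M_1,\dots,M_r$ --- it passes the $(+K_q)$ test of step 2.3 because $G$ is maximal $K_q$-free, it survives the removal of isomorphic copies in step 3, and it survives step 4 because $G \arrowsv (a_1,\dots,a_s)$; so a copy of $G$ remains in $\mathcal{B}$. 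Together the two inclusions give the theorem.

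The step I expect to be the main obstacle is the maximality half of the structural claim: that each neighborhood $N_G(v_j)$ is not merely $K_{q-1}$-free, but a \emph{maximal} $K_{q-1}$-free subset of $\V(H)$. This is precisely where the hypothesis $G \in \mH_{max}$ --- equivalently, that $G$ is a $(+K_q)$-graph --- is genuinely used, and the same ``which new clique appears when a non-edge is filled in'' argument also drives the necessity of conditions 2.2(a) and 2.2(b). A secondary point requiring care is the multiset bookkeeping in 2.2(b): neighborhoods of distinct added vertices that happen to coincide must be counted with multiplicity in the sub-tuples.
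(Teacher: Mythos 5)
Your proof is correct: the soundness half (where you use the step 2.3 and step 4 checks for membership in $\mH_{max}(a_1, ..., a_s; q; n)$ and, crucially, condition 2.2(b) to get $\alpha(G) \le t$) and the completeness half (where Proposition \ref{proposition: G - A in mH_(+K_(q-1))(a_1 - 1, ..., a_s; q; n - abs(A))} puts $G-A$ into $\mathcal{A}'$ and the $(+K_q)$ property forces each $N_G(v_j)$ to be a \emph{maximal} $K_{q-1}$-free subset of $\V(G-A)$ satisfying 2.2(a) and 2.2(b)) together give exactly the stated description of $\mathcal{B}$. The paper itself gives no proof of this theorem — it is imported from \cite{BN17} — but your argument is the standard one the algorithm is built around and mirrors the structure of the paper's own proof of Theorem \ref{theorem: algorithm mH_(max)(a_1, ..., a_s; q; n), cone vertices}, so there is nothing essentially different to compare.
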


Algorithm \ref{algorithm: mH_(max)(a_1, ..., a_s; q; n)} is based on a very similar algorithm that we used in \cite{BN16} to prove the lower bound $F_e(3, 3; 4) > 19$. It is possible to prove the Main Theorem using Algorithm \ref{algorithm: mH_(max)(a_1, ..., a_s; q; n)}, but it would take us months of computational time. For this reason, we will present an algorithm which is a modification of Algorithm \ref{algorithm: mH_(max)(a_1, ..., a_s; q; n)} and helped us prove the Main Theorem in less than a week on a personal computer.

Further we shall use the following term:
\begin{definition}
\label{definition: cone vertex}
We say that $v$ is a cone vertex in the graph $G$ if $v$ is adjacent to all other vertices in $G$.
\end{definition}

Suppose that $G \in \mH_{max}(a_1, ..., a_s; q; n)$ and $G$ has a cone vertex, i.e. $G = K_1 + H$. According to Proposition \ref{proposition: G - A arrowsv (a_1, ..., a_(i - 1), a_i - 1, a_(i + 1_, ..., a_s)}, $H \in \mH_{max}(a_1 - 1, ..., a_s; q - 1; n - 1)$. Therefore, if we know all the graphs in $\mH_{max}(a_1 - 1, ..., a_s; q - 1; n - 1)$ we can easily obtain the graphs in $\mH_{max}(a_1, ..., a_s; q; n)$ which have a cone vertex. We will use this fact to modify Algorithm \ref{algorithm: mH_(max)(a_1, ..., a_s; q; n)} and make it faster in the case where all graphs in $\mH_{max}(a_1 - 1, ..., a_s; q - 1; n - 1)$ are already known. The new modified algorithm is based on the following:

\begin{proposition}
\label{proposition: G in mH_(max)(a_1, ..., a_s; q; n) and alpha(G) > r}
Let $G \in \mH_{max}(a_1, ..., a_s; q; n)$ be a graph without cone vertices and $A$ be an independent set in $G$ such that $G - A$ has a cone vertex, i.e. $G - A$ = $K_1 + H$. Then $G = \overline{K}_{r + 1} + H$, where $r = \abs{A}$, $H$ has no cone vertices and $K_1 + H \in \mH_{max}(a_1, ..., a_s; q; n - r)$.
\end{proposition}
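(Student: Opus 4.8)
The plan is to single out the vertex $v$ serving as the $K_1$ in the decomposition $G-A=K_1+H$, so that $\V(G-A)=\{v\}\cup\V(H)$ and $v$ is adjacent in $G$ to every vertex of $\V(H)$, and then to establish two adjacency facts: (i) $v$ is non-adjacent to every vertex of $A$; and (ii) every vertex of $A$ is adjacent to every vertex of $\V(H)$. Since $A$ is independent, (i) and (ii) together say precisely that the $r+1$ vertices of $A\cup\{v\}$ form an independent set each of whose members is joined to all of $\V(H)$; that is, $G=\overline{K}_{r+1}+H$, with this independent set as the $\overline{K}_{r+1}$. The remaining claims about $H$ and $K_1+H$ then follow cheaply.

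The tool for (i) and (ii) is that $G$, being a maximal $K_q$-free graph, is a $(+K_q)$-graph: adding any non-edge $[x,y]$ of $G$ creates a new $q$-clique, which must contain $x$ and $y$, so deleting $x$ and $y$ from it leaves a $(q-2)$-clique $Q\subseteq N_G(x)\cap N_G(y)$ in $G$. First observe that $v$ has a non-neighbor $a_0\in A$: since $G$ has no cone vertex, $N_G(v)\neq\V(G)\setminus\{v\}=\V(H)\cup A$, whereas $N_G(v)\supseteq\V(H)$, so some $a_0\in A$ misses $v$. For (i), suppose toward a contradiction that some $a_1\in A$ is adjacent to $v$; then $a_1\neq a_0$, and $[a_0,a_1]$ is a non-edge of $G$ as $A$ is independent, so the associated $(q-2)$-clique $Q\subseteq N_G(a_0)\cap N_G(a_1)$ misses $A$ (independence of $A$) and misses $v$ (because $v\not\sim a_0$); hence $Q\subseteq\V(H)$, and then $Q\cup\{v,a_1\}$ is a $q$-clique of $G$ (as $v$ is adjacent to all of $\V(H)\supseteq Q$, $a_1$ to all of $Q\subseteq N_G(a_1)$, and $v\sim a_1$), contradicting $\omega(G)<q$. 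For (ii), if some $a\in A$ were non-adjacent to some $w\in\V(H)$, apply the same tool to $[a,w]$: the resulting $(q-2)$-clique $Q\subseteq N_G(a)\cap N_G(w)$ misses $A$ and, by (i), misses $v$, so $Q\subseteq\V(H)$, and $v$ together with $w$ extends $Q$ to a $q$-clique of $G$ --- again impossible.

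Granting $G=\overline{K}_{r+1}+H$ with the $\overline{K}_{r+1}$ on $A\cup\{v\}$, the rest is quick. A cone vertex $w$ of $H$ would, inside $G$, be adjacent to all of $\V(H)\setminus\{w\}$ and to all of $A\cup\{v\}$, hence be a cone vertex of $G$, which is forbidden; so $H$ has no cone vertex. Also $K_1+H=G-A$ has $n-r$ vertices and $\omega(K_1+H)=1+\omega(H)=\omega(G)<q$. For $K_1+H\arrowsv(a_1,\ldots,a_s)$: given any $s$-coloring of $K_1+H$, extend it to $G$ by coloring every vertex of $A$ with the color of $v$; the monochromatic $a_i$-clique of color $i$ provided by $G\arrowsv(a_1,\ldots,a_s)$ meets the independent set $A\cup\{v\}$ in at most one vertex, and if that vertex lies in $A$ it may be replaced by $v$ (adjacent to all of $\V(H)$ and of color $i$), giving a monochromatic $a_i$-clique inside $K_1+H$. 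Finally $K_1+H$ is a $(+K_q)$-graph: every non-edge of $K_1+H$ lies inside $\V(H)$ and so is a non-edge of $G$; the $q$-clique it creates in $G$ meets $A\cup\{v\}$ in at most one vertex, which, if in $A$, is again swapped for $v$ to land inside $K_1+H$; and since $K_1+H=G-A$ is an induced subgraph of $G$, the clique number of $(K_1+H)+e$ is at most $\omega(G+e)=q$, hence equals $q$. Therefore $K_1+H\in\mH_{max}(a_1,\ldots,a_s;q;n-r)$.

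Rather than any single hard step, the delicate point is keeping the two $(+K_q)$-arguments airtight: one must verify in each that the extracted $(q-2)$-clique genuinely misses both $A$ and $v$, which is exactly where the hypotheses "$A$ independent" and "every non-neighbor of $v$ lies in $A$" (a consequence of $v$ being a cone vertex of $G-A$) enter; and in the arrowing and maximality checks one must correctly dispose of the case where the clique produced uses a vertex of $A$, by substituting $v$ for it.
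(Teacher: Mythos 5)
Your proof is correct and follows essentially the same route as the paper: both exploit that $G$ is a maximal $K_q$-free graph to force every vertex of $A$ to be non-adjacent to the cone vertex of $G-A$ and joined to all of $\V(H)$, yielding $G=\overline{K}_{r+1}+H$, from which the claims about $H$ and $K_1+H$ follow. The difference is only presentational: the paper packages the maximality step as a neighborhood-containment argument ($v_i\not\sim u$ and $N_G(v_i)\subseteq N_G(u)$ force $N_G(v_i)=N_G(u)=\V(H)$), whereas you extract the $(q-2)$-clique from a chosen non-edge directly, and you also write out the arrowing and maximality of $K_1+H$ that the paper dismisses as ``easy to see''.
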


\begin{proof}
Let $A = \set{v_1, ..., v_r}$ be an independent set in $G$ and $G - A$ = $K_1 + H = \set{u} + H$. Since $G$ has no cone vertices, there exist $v_i \in A$ such that $v_i$ is not adjacent to $u$. Then $N_G(v_i) \subseteq N_G(u)$ and since $G$ is a maximal $K_q$-free graph we obtain $N_G(v_i) = N_G(u) = \V(H)$. Hence, $u$ is not adjacent to any of the vertices in $A$, and therefore $N_G(v_j) = N_G(u) = \V(H), \forall v_j \in A$. We derived $G = \overline{K}_{r + 1} + H$. The graph $H$ has no cone vertices, since any cone vertex in $H$ would be a cone vertex in $G$. It is easy to see that if $\overline{K}_{r + 1} + H \arrowsv (a_1, ..., a_s)$, then $K_1 + H \arrowsv (a_1, ..., a_s)$. Therefore $K_1 + H \in \mH_{max}(a_1, ..., a_s; q; n - r)$.
\end{proof}

Now we present the main algorithm used in this paper, which is a modification of Algorithm \ref{algorithm: mH_(max)(a_1, ..., a_s; q; n)}.

\begin{algorithm}
	\label{algorithm: mH_(max)(a_1, ..., a_s; q; n), cone vertices}
	The input of the algorithm are the set $\mathcal{A}_1 = \mH_{max}^t(a_1 - 1, ..., a_s; q; n - r)$ and the set $\mathcal{A}_2 = \mH_{max}^t(a_1 - 1, ..., a_s; q - 1; n - 1)$. The output of the algorithm is the set $\mathcal{B}$ of all graphs $G \in \mH_{max}^t(a_1, ..., a_s; q; n)$ with $\alpha(G) \geq r$.
	
	\emph{1.} By removing edges from the graphs in $\mathcal{A}_1$ obtain the set
	
	$\mathcal{A}_1' = \set{H \in \mH_{+K_{q - 1}}^t(a_1 - 1, ..., a_s; q; n - r) : \mbox{ $H$ has no cone vertices}}$.
	
	\emph{2.} Repeat step 2 of Algorithm \ref{algorithm: mH_(max)(a_1, ..., a_s; q; n)}.

	\emph{3.} Repeat step 3 of Algorithm \ref{algorithm: mH_(max)(a_1, ..., a_s; q; n)}.

	\emph{4.} Repeat step 4 of Algorithm \ref{algorithm: mH_(max)(a_1, ..., a_s; q; n)}.
	
	\emph{5.} If $t > r$, find the subset $\mathcal{A}_1''$ of $\mathcal{A}_1$ containing all graphs with exactly one cone vertex. For each graph $H \in \mathcal{A}_1''$, if $K_1 + H \arrowsv (a_1, ..., a_s)$, then add $\overline{K}_{r + 1} + H$ to $\mathcal{B}$.

	\emph{6.} For each graph $H$ in $\mathcal{A}_2$ such that $\alpha(H) \geq r$, if $K_1 + H \arrowsv (a_1, ..., a_s)$, then add $K_1 + H$ to $\mathcal{B}$.
	
\end{algorithm}

\begin{theorem}
\label{theorem: algorithm mH_(max)(a_1, ..., a_s; q; n), cone vertices}
After the execution of Algorithm \ref{algorithm: mH_(max)(a_1, ..., a_s; q; n), cone vertices}, the obtained set $\mathcal{B}$ coincides with the set of all graphs $G \in \mH_{max}^t(a_1, ..., a_s; q; n)$ with $\alpha(G) \geq r$.
\end{theorem}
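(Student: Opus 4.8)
The statement asserts the correctness of Algorithm~\ref{algorithm: mH_(max)(a_1, ..., a_s; q; n), cone vertices}, so the plan is to prove the two inclusions between the output set $\mathcal{B}$ and the target set $\mathcal{X} := \set{G \in \mH_{max}^t(a_1, \ldots, a_s; q; n) : \alpha(G) \geq r}$. For \emph{soundness} ($\mathcal{B} \subseteq \mathcal{X}$): steps~2--4 are literally steps~2--4 of Algorithm~\ref{algorithm: mH_(max)(a_1, ..., a_s; q; n)} run on the set $\mathcal{A}_1'$, which is a subset of $\mH_{+K_{q-1}}^t(a_1 - 1, \ldots, a_s; q; n - r)$, so by Theorem~\ref{theorem: algorithm mH_(max)(a_1, ..., a_s; q; n)} whatever they contribute to $\mathcal{B}$ already lies in $\mathcal{X}$. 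For steps~5 and~6 I would argue directly: a graph $H$ in $\mathcal{A}_1''$ has a unique cone vertex and hence is $K_1 + H_0$ with $H_0$ cone-vertex-free, step~5 adds $G^{*} = \overline{K}_{r+1} + H_0$, and step~6 adds $G^{*} = K_1 + H$ from $H \in \mathcal{A}_2$; in either case one verifies (i) $\omega(G^{*}) = 1 + \omega(H_0)$ (resp.\ $1+\omega(H)$) which is $< q$ by the clique bound on the input graph; (ii) $G^{*}$ is maximal $K_q$-free, since a non-edge of $G^{*}$ either lies inside $H_0$ (resp.\ $H$), where maximality of the input graph gives $\omega(G^{*}+e) = q$, or joins two vertices of $\overline{K}_{r+1}$, where $\omega(G^{*}+e) = 2+\omega(H_0) = q$ using that a non-complete maximal $K_q$-free graph has clique number exactly $q-1$; (iii) $\alpha(G^{*})$ equals $\alpha(H)$ in step~6 and $\max\set{r+1,\alpha(H_0)}$ in step~5, landing in $[r,t]$ --- here the guard $t > r$ of step~5 is exactly what is needed; and (iv) $G^{*} \arrowsv (a_1, \ldots, a_s)$, which is the tested condition.

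For \emph{completeness} ($\mathcal{X} \subseteq \mathcal{B}$) I would take $G \in \mathcal{X}$ and split on whether $G$ has a cone vertex. If $G = K_1 + H$, then Proposition~\ref{proposition: G - A arrowsv (a_1, ..., a_(i - 1), a_i - 1, a_(i + 1_, ..., a_s)} applied to the singleton independent set formed by that cone vertex gives $H \arrowsv (a_1 - 1, a_2, \ldots, a_s)$; combined with $\omega(H) = \omega(G)-1 < q-1$ and the observation that maximality of $G$ forces maximality of $H$ in $\mH(a_1-1,\ldots,a_s;q-1)$, this puts $H \in \mathcal{A}_2$ with $\alpha(H) = \alpha(G) \in [r,t]$, so step~6 produces $G$. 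If $G$ has no cone vertex, pick (using $\alpha(G) \geq r$) an independent set $A$ with $\abs{A} = r$; by Proposition~\ref{proposition: G - A in mH_(+K_(q-1))(a_1 - 1, ..., a_s; q; n - abs(A))}, $G - A \in \mH_{+K_{q-1}}^t(a_1-1,\ldots,a_s;q;n-r)$, hence (passing to a maximal $K_q$-free supergraph, arrowing being monotone under adding edges) $G - A$ is a spanning subgraph of some member of $\mathcal{A}_1$. If $A$ can be chosen so that $G - A$ is cone-vertex-free, then $G - A \in \mathcal{A}_1'$ and, exactly as in the proof of Theorem~\ref{theorem: algorithm mH_(max)(a_1, ..., a_s; q; n)}, the $r$-element multiset $N = \set{N_G(v) : v \in A}$ of maximal $K_{q-1}$-free subsets of $\V(G - A)$ satisfies conditions 2.2(a),(b), the graph $G(N)$ built in step~2.3 equals $G$, and $G$ survives the maximality test there and the arrowing filter in step~4. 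Otherwise $G - A$ has a cone vertex for \emph{every} size-$r$ independent set $A$; fixing one such $A$ and invoking Proposition~\ref{proposition: G in mH_(max)(a_1, ..., a_s; q; n) and alpha(G) > r} pins $G$ down to $G = \overline{K}_{r+1} + H$ with $H$ cone-vertex-free and $K_1 + H \in \mH_{max}(a_1, \ldots, a_s; q; n-r)$. Since arrowing $(a_1, \ldots, a_s)$ implies arrowing $(a_1 - 1, a_2, \ldots, a_s)$, while $\omega$ and maximal $K_q$-freeness do not depend on the arrowing parameters, $K_1 + H$ lies in $\mathcal{A}_1$ and has exactly one cone vertex, i.e.\ it is (up to isomorphism) the element of $\mathcal{A}_1''$ from which step~5 recovers $\overline{K}_{r+1} + H = G$, the arrowing test passing because $G \arrowsv (a_1, \ldots, a_s)$. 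These cases are exhaustive and step~3's isomorph removal is harmless.

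The clique/maximality/independence bookkeeping above is routine; the real content is making the case split airtight. The delicate point is showing that a cone-vertex-free $G$ which genuinely escapes steps~2--4 --- because \emph{every} size-$r$ independent set leaves a cone vertex upon deletion --- is forced by Proposition~\ref{proposition: G in mH_(max)(a_1, ..., a_s; q; n) and alpha(G) > r} into the rigid shape $\overline{K}_{r+1} + H$, and then correctly matching this $G$ to the member of $\mathcal{A}_1''$ from which step~5 produces it, so that step~5 both catches it and emits nothing outside $\mathcal{X}$; note that when $t = r$ no such $G$ exists, which is exactly why step~5 is guarded by $t > r$. One must also confirm that restricting the input of the reused steps~2--4 to cone-vertex-free graphs loses nothing that is not already reproduced by steps~5 and~6.
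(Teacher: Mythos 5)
Your proposal is correct and follows essentially the same route as the paper: soundness of steps 2--4 via Theorem \ref{theorem: algorithm mH_(max)(a_1, ..., a_s; q; n)} and direct verification for steps 5--6, and completeness by the same case split on cone vertices using Propositions \ref{proposition: G - A arrowsv (a_1, ..., a_(i - 1), a_i - 1, a_(i + 1_, ..., a_s)}, \ref{proposition: G - A in mH_(+K_(q-1))(a_1 - 1, ..., a_s; q; n - abs(A))} and \ref{proposition: G in mH_(max)(a_1, ..., a_s; q; n) and alpha(G) > r}. You merely phrase the dichotomy complementarily (existence of a size-$r$ independent set $A$ with $G-A$ cone-free versus every such $A$ leaving a cone vertex) and spell out the ``clearly'' verifications for steps 5 and 6, which the paper leaves implicit.
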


\begin{proof}
Suppose that after the execution of Algorithm \ref{algorithm: mH_(max)(a_1, ..., a_s; q; n), cone vertices}, $G \in \mathcal{B}$. If after step 4 $G \in \mathcal{B}$, then according to Theorem \ref{theorem: algorithm mH_(max)(a_1, ..., a_s; q; n)}, $G \in \mH_{max}^t(a_1, ..., a_s; q; n)$ and $\alpha(G) \geq r$. If $G$ is added to $\mathcal{B}$ in step 5 or step 6, then clearly $G \in \mH_{max}^t(a_1, ..., a_s; q; n)$ and $\alpha(G) \geq r$.

Now let $G \in \mH_{max}^t(a_1, ..., a_s; q; n)$ and $\alpha(G) \geq r$. If $G = K_1 + H$ for some graph $H$, then, according to Proposition \ref{proposition: G - A arrowsv (a_1, ..., a_(i - 1), a_i - 1, a_(i + 1_, ..., a_s)}, $H \in \mathcal{A}_2$ and in step 6 $G$ is added to $\mathcal{B}$. Suppose that $G$ has no cone vertices and $G$ has an independent set $A$ such that $\abs{A} = r$ and $G - A$ has a cone vertex, i.e. $G - A = K_1 + H$. Then, according to Proposition \ref{proposition: G in mH_(max)(a_1, ..., a_s; q; n) and alpha(G) > r}, $G = \overline{K}_{r + 1} + H$, $K_1 + H$ has exactly one cone vertex and $K_1 + H \arrowsv (a_1, ..., a_s)$. It is clear that $t > r$ and hence in step 5 $G$ is added to $\mathcal{B}$. Finally, if $G - A$ has no cone vertices, then according to Proposition \ref{proposition: G - A in mH_(+K_(q-1))(a_1 - 1, ..., a_s; q; n - abs(A))}, $G - A \in \mathcal{A}_1'$ and it follows from Theorem \ref{theorem: algorithm mH_(max)(a_1, ..., a_s; q; n)} that after the execution of step 4, $G \in \mathcal{B}$.
\end{proof}

\begin{remark}
\label{remark: algorithms case r = 2}
Note that if $n \geq q$ and $r = 2$ Algorithm \ref{algorithm: mH_(max)(a_1, ..., a_s; q; n)} and Algorithm \ref{algorithm: mH_(max)(a_1, ..., a_s; q; n), cone vertices} obtain all graphs in $G \in \mH_{max}^t(a_1, ..., a_s; q; n)$.
\end{remark}

The \emph{nauty} programs \cite{MP13} have an important role in this paper. We use them for fast generation of non-isomorph graphs and isomorph rejection.

\section{Proof of the Main Theorem and Theorem 2.3}

We will first prove Theorem \ref{theorem: F_v(2_(m - 7), 7; m - 1) = m + 11} by proving Conjecture \ref{conjecture: F_v(2_r, p; r + p - 1) = F_v(2, 2, p; p + 1) + r - 2} in the case $p = 7$. Since $F_v(2, 2, 7; 8) = 20$ \cite{BN17}, according to Theorem \ref{theorem: min_(r geq 2) set(F_v(2_r, p; r + p - 1) - r) = F_v(2_(r_0), p; r_0 + p - 1) - r_0}(d), to prove the conjecture in this case we need to prove the inequalities $F_v(2, 2, 2, 7; 9) > 20$, $F_v(2, 2, 2, 2, 7; 10) > 21$ and $F_v(2, 2, 2, 2, 2, 7; 11) > 22$. It is easy to see that it is enough to prove only the last of the three inequalities (see \cite{BN17} for details). Using Algorithm \ref{algorithm: mH_(max)(a_1, ..., a_s; q; n)} it can be proved that $F_v(2, 2, 2, 2, 2, 7; 11) > 22$ , but it would require a lot of computational time. Instead, we will prove the three inequalities successively using Algorithm \ref{algorithm: mH_(max)(a_1, ..., a_s; q; n), cone vertices}. Only the proof of the first inequality is presented in details, since the proofs of the others are very similar. We will show that $\mH(2, 2, 7; 8; 19) = \emptyset$. The proof uses the graphs $\mH_{max}^3(4; 8; 8)$, $\mH_{max}^3(5; 8; 10)$, $\mH_{max}^3(6; 8; 12)$, $\mH_{max}^3(7; 8; 14)$, $\mH_{max}^3(2, 7; 8; 16)$, $\mH_{max}^3(2, 2, 7; 8; 19)$, $\mH_{max}^2(4; 8; 9)$, $\mH_{max}^2(5; 8; 11)$, $\mH_{max}^2(6; 8; 13)$, $\mH_{max}^2(7; 8; 15)$, $\mH_{max}^2(2, 7; 8; 17)$, $\mH_{max}^2(2, 2, 7; 8; 19)$ obtained in \cite{BN17} in the proof of the lower bound $F_v(2, 2, 7; 8) \geq 20$ (see Table \ref{table: finding all graphs in H(2, 2, 7; 8; 19)}).

For positive integers $a_1, ..., a_s$ and $m$ and $p$ defined by (\ref{equation: m and p}), Nenov proved in \cite{Nen02a} that if $G \in \mH(a_1, ..., a_s; m - 1; n)$ and $n < m + 3p$, then $\alpha(G) < n - m - p + 1$. 
Suppose that $G \in \mH(2, 2, 2, 7; 9; 20)$. It follows that $\alpha(G) \leq 3$ and it is clear that $\alpha(G) \geq 2$. Therefore, it is enough to prove that there are no graphs with independence number 2 or 3 in $\mH_{max}(2, 2, 2, 7; 9; 20)$.

First we prove that there are no graphs in $\mH_{max}(2, 2, 2, 7; 9; 20)$ with independence number 3. It is clear that $K_7$ is the only graph in $\mH_{max}(4; 9; 7)$. By applying Algorithm \ref{algorithm: mH_(max)(a_1, ..., a_s; q; n), cone vertices}($r = 2; t = 3$) with $\mathcal{A}_1 = \mH_{max}^3(4; 9; 7) = \set{K_7}$ and $\mathcal{A}_2 = \mH_{max}^3(4; 8; 8)$ were obtained all graphs in $\mH_{max}^3(5; 9; 9)$ (see Remark \ref{remark: algorithms case r = 2}). In the same way, we successively obtained all graphs in $\mH_{max}^3(6; 9; 11)$, $\mH_{max}^3(7; 9; 13)$, $\mH_{max}^3(2, 7; 9; 15)$ and $\mH_{max}^3(2, 2, 7; 9; 17)$ (see Remark \ref{remark: algorithms case r = 2}). In the end, by applying Algorithm \ref{algorithm: mH_(max)(a_1, ..., a_s; q; n), cone vertices}($r = 3; t = 3$) with $\mathcal{A}_1 = \mH_{max}^3(2, 2, 7; 9; 17)$ and $\mathcal{A}_2 = \mH_{max}^3(2, 2, 7; 8; 19) = \emptyset$ no graphs with independence number 3 in $\mH_{max}(2, 2, 2, 7; 9; 20)$ were obtained.

It remains to prove that there are no graphs in $\mH_{max}(2, 2, 2, 7; 9; 20)$ with independence number 2. Clearly, $K_8$ is the only graph in $\mH_{max}(4; 9; 8)$. By applying Algorithm \ref{algorithm: mH_(max)(a_1, ..., a_s; q; n), cone vertices}($r = 2; t = 2$) with $\mathcal{A}_1 = \mH_{max}^2(4; 9; 8) = \set{K_8}$ and $\mathcal{A}_2 = \mH_{max}^2(4; 8; 9)$ were obtained all graphs in $\mH_{max}^2(5; 9; 10)$ (see Remark \ref{remark: algorithms case r = 2}). In the same way, we successively obtained all graphs in $\mH_{max}^2(6; 9; 12)$, $\mH_{max}^2(7; 9; 14)$, $\mH_{max}^2(2, 7; 9; 16)$ and $\mH_{max}^2(2, 2, 7; 9; 18)$ (see Remark \ref{remark: algorithms case r = 2}). In the end, by applying Algorithm \ref{algorithm: mH_(max)(a_1, ..., a_s; q; n), cone vertices}($r = 2; t = 2$) with $\mathcal{A}_1 = \mH_{max}^2(2, 2, 7; 9; 18)$ and $\mathcal{A}_2 = \mH_{max}^2(2, 2, 7; 8; 19) = \emptyset$ no graphs with independence number 2 in $\mH_{max}(2, 2, 2, 7; 9; 20)$ were obtained.

We proved that $\mH_{max}(2, 2, 2, 7; 9; 20) = \emptyset$ and $F_v(2, 2, 2, 7; 9) > 20$.

In the same way, the graphs obtained in the proof of the inequality $F_v(2, 2, 2, 7; 9) > 20$ are used to prove $F_v(2, 2, 2, 2, 7; 10) > 21$ and the graphs obtained in the proof of the inequality $F_v(2, 2, 2, 2, 7; 10) > 21$ are used to prove $F_v(2, 2, 2, 2, 2, 7; 11) > 22$. The number of graphs obtained in each step of the proofs is shown in Table \ref{table: finding all graphs in H(2, 2, 2, 7; 9; 20)}, Table \ref{table: finding all graphs in H(2, 2, 2, 2, 7; 10; 21)} and Table \ref{table: finding all graphs in H(2, 2, 2, 2, 2, 7; 11; 22)}. Notice that the number of graphs without cone vertices is relatively small, which reduces the computation time significantly.\\

Thus, $r_0(7) = 2$ and
$$F_v(2_{m - 7}, 7; m - 1) = F_v(2, 2, 7; 8) + m - 9 = m + 11$$
which finishes the proof of Theorem \ref{theorem: F_v(2_(m - 7), 7; m - 1) = m + 11}. The Main Theorem follows easily. Indeed, let $a_1, ..., a_s$ be positive integers such that $\max\set{a_1, ..., a_s} = 7$ and $m = \sum\limits_{i=1}^s (a_i - 1) + 1$. Then
$$F_v(a_1, ..., a_s; m - 1) \geq F_v(2_{m - 7}, 7; m - 1) =  m + 11.$$
\vspace{-10pt}\qed

\section{Concluding remarks}

The considered method for lower bounding the numbers $F_v(a_1, ..., a_s; q)$ gives good and accurate results when $q = m - 1$. However, when $q < m - 1$ the bounds are not exact. We will consider the most interesting case $q = p + 1$, where $p = \max\set{a_1, ..., a_s}$. In \cite{BN15a} we prove the inequality
\begin{equation}
\label{equation: F_v(a_1, ..., a_s; p + 1) geq F_v(2, 2, p; p + 1) + sum_(i = 3)^(m - p) alpha(i, p)}
F_v(a_1, ..., a_s; p + 1) \geq F_v(2, 2, p; p + 1) + \sum_{i = 3}^{m - p}\alpha(i, p),
\end{equation}
where $\alpha(i, p) = \max\set{\alpha(G) : G \in \mH_{extr}(2_i, p; p + 1)}$. Since $\alpha(i, p) \geq 2$, from (\ref{equation: F_v(a_1, ..., a_s; p + 1) geq F_v(2, 2, p; p + 1) + sum_(i = 3)^(m - p) alpha(i, p)}) it follows
$$F_v(a_1, ..., a_s; p + 1) \geq F_v(2, 2, p; p + 1) + 2(m - p - 2).$$
In the special case $p = 7$, since $F_v(2, 2, 7; 8) = 20$ we obtain
\begin{equation}
\label{equation: F_v(a_1, ..., a_s; 8) geq 2m + 2}
F_v(a_1, ..., a_s; 8) \geq 2m + 2.
\end{equation}
In particular, when $m = 13$ we have $F_v(a_1, ..., a_s; 8) \geq 28$. Since the Ramsey number $R(3, 8) = 28$, it follows that $\alpha(i, 7) \geq 3$, when $i \geq 6$. Now from (\ref{equation: F_v(a_1, ..., a_s; p + 1) geq F_v(2, 2, p; p + 1) + sum_(i = 3)^(m - p) alpha(i, p)}) it follows easily that
\begin{theorem}
\label{theorem: F_v(a_1, ..., a_s; 8) geq 3m - 10}
If $m \geq 13$, and $\max\set{a_1, ..., a_s} = 7$, then
$$F_v(a_1, ..., a_s; 8) \geq 3m - 10.$$
\end{theorem}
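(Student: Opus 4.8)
The plan is to read off the bound from inequality (\ref{equation: F_v(a_1, ..., a_s; p + 1) geq F_v(2, 2, p; p + 1) + sum_(i = 3)^(m - p) alpha(i, p)}) by using, on the tail of the displayed sum, the improved estimate $\alpha(i, 7) \geq 3$ (valid for $i \geq 6$) in place of the trivial estimate $\alpha(i, 7) \geq 2$. Concretely, I would specialize (\ref{equation: F_v(a_1, ..., a_s; p + 1) geq F_v(2, 2, p; p + 1) + sum_(i = 3)^(m - p) alpha(i, p)}) to $p = 7$ and substitute $F_v(2, 2, 7; 8) = 20$, obtaining
$$F_v(a_1, ..., a_s; 8) \geq 20 + \sum_{i = 3}^{m - 7}\alpha(i, 7).$$

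Since $m \geq 13$ gives $m - 7 \geq 6$, the index set $\set{3, 4, \dots, m - 7}$ is the disjoint union of $\set{3, 4, 5}$ and the nonempty block $\set{6, 7, \dots, m - 7}$, which has exactly $m - 12$ elements. I would bound the first block using $\alpha(i, 7) \geq 2$ and the second using $\alpha(i, 7) \geq 3$, so that
$$\sum_{i = 3}^{m - 7}\alpha(i, 7) \geq 3 \cdot 2 + (m - 12) \cdot 3 = 3m - 30,$$
and hence $F_v(a_1, ..., a_s; 8) \geq 20 + (3m - 30) = 3m - 10$, which is the claim.

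The argument has essentially no obstacle: all the substantive inputs --- the value $F_v(2, 2, 7; 8) = 20$, the inequality (\ref{equation: F_v(a_1, ..., a_s; p + 1) geq F_v(2, 2, p; p + 1) + sum_(i = 3)^(m - p) alpha(i, p)}), and the bound $\alpha(i, 7) \geq 3$ for $i \geq 6$ deduced from $R(3, 8) = 28$ together with $F_v(a_1, ..., a_s; 8) \geq 2m + 2$ --- are already established in the excerpt. The only point demanding a moment of care is the bookkeeping of the summation range, namely verifying that for $m \geq 13$ the head block is exactly $\set{3, 4, 5}$ and the tail block $\set{6, \dots, m - 7}$ is nonempty with $m - 12 \geq 1$ indices; for $m = 13$ this reduces to the single term $\alpha(6, 7) \geq 3$, giving $3 \cdot 13 - 10 = 29$, consistent with the lower bound $20 + \alpha(3, 7) + \alpha(4, 7) + \alpha(5, 7) + \alpha(6, 7) \geq 29$.
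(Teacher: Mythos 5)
Your proposal is correct and follows essentially the same route as the paper: the paper likewise specializes inequality (\ref{equation: F_v(a_1, ..., a_s; p + 1) geq F_v(2, 2, p; p + 1) + sum_(i = 3)^(m - p) alpha(i, p)}) to $p = 7$ with $F_v(2, 2, 7; 8) = 20$, using $\alpha(i,7) \geq 2$ for $i = 3, 4, 5$ and $\alpha(i,7) \geq 3$ for $i \geq 6$ (the latter from $R(3,8) = 28$ and the bound $F_v \geq 2m + 2$), leaving exactly the bookkeeping you carried out explicit. Your arithmetic, including the check at $m = 13$, is accurate.
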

It is clear that when $3m - 10 \geq R(4, 8)$ these bounds for $F_v(a_1, ..., a_s; 8)$ can be improved considerably.

In \cite{XS10} it is proved the inequality $F_v(p, p; p + 1) \geq 4p - 1$. From this result it follows that $F_v(7, 7; 8) \geq 27$. From (\ref{equation: F_v(a_1, ..., a_s; 8) geq 2m + 2}) we obtain $F_v(7, 7; 8) \geq 28$, and from Theorem \ref{theorem: F_v(a_1, ..., a_s; 8) geq 3m - 10} we obtain $F_v(7, 7; 8) \geq 29$.

The numbers $F_v(p, p; p + 1)$ are of significant interest, but so far we know very little about them. Only two of these numbers are known, $F_v(2, 2; 3) = 5$ (obvious), and $F_v(3, 3; 4) = 14$ (\cite{Nen81} and \cite{PRU99}). It is also known that $17 \leq F_v(4, 4; 5) \leq 23$, \cite{XLS10}, $F_v(5, 5; 6) \geq 23$, \cite{BN15a}, $28 \leq F_v(6, 6; 7) \leq 70$, \cite{BN17}, and $F_v(7, 7; 8) \geq 29$ from this paper. Using Algorithm \ref{algorithm: mH_(max)(a_1, ..., a_s; q; n)}, we managed to improve the known lower bound $F_v(2, 2, 2, 4; 5) \geq 17$ and thus improved the lower bound on $F_v(4, 4; 5)$ as well:

\begin{theorem}
\label{theorem: F_v(4, 4; 5) geq F_v(2, 3, 4; 5) geq F_v(2, 2, 2, 4; 5) geq 19}
$F_v(4, 4; 5) \geq F_v(2, 3, 4; 5) \geq F_v(2, 2, 2, 4; 5) \geq 19$.
\end{theorem}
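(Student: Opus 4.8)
The two left inequalities are instances of the general monotonicity principle used throughout the paper: if $(b_1,\dots,b_r) \in \mathcal{S}(m,p)$ and $(a_1,\dots,a_s) \in \mathcal{S}(m,p)$ with the $b$-tuple "finer'' than the $a$-tuple (obtained by splitting some $a_i$ into a sum), then $F_v(b_1,\dots,b_r;q) \leq F_v(a_1,\dots,a_s;q)$. Concretely, $(2,2,2,4)$ refines $(2,3,4)$ (split the $3$ as $2+(2-1)$, i.e.\ replace a colour class demanding a $K_3$ by two classes demanding $K_2$), and $(2,3,4)$ refines $(4,4)$ (split one $4$ into $2+3$); in both cases $m=8$ and $p=4$, so $q=5=m-1$ is the same. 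Hence the plan for the first two inequalities is merely to invoke this refinement lemma (the same fact already used to pass from $F_v(2_{m-7},7;m-1)$ to $F_v(a_1,\dots,a_s;m-1)$ at the end of Section~4, and recorded in Section~2 via $\min_{\mathcal{S}(m,p)} F_v = F_v(2_{m-p},p;q)$). No computation is needed there.

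The substantive content is $F_v(2,2,2,4;5) \geq 19$, equivalently $\mH(2,2,2,4;5;18) = \emptyset$. The plan is to run Algorithm~\ref{algorithm: mH_(max)(a_1, ..., a_s; q; n)} in the same layered fashion as in the proof of the Main Theorem. First I would invoke Nenov's bound (\cite{Nen02a}, quoted in Section~4): for $G \in \mH(2,2,2,4;5;n)$ with $m=8$, $p=4$ and $n=18 < m+3p = 20$, we get $\alpha(G) < n-m-p+1 = 7$; more usefully, since $K_5 \not\subseteq G$ forces independence-number restrictions from below as well, one narrows the range of $\alpha(G)$ that must be checked. Then, starting from the trivial base case $\mH_{max}(4;5;4) = \{K_4\}$, I would build up successively, for each relevant value $t$ of the independence-number bound, the sets
$$\mH_{max}^t(4;5;\,\cdot\,) \;\rightarrow\; \mH_{max}^t(2,4;5;\,\cdot\,) \;\rightarrow\; \mH_{max}^t(2,2,4;5;\,\cdot\,) \;\rightarrow\; \mH_{max}^t(2,2,2,4;5;18),$$
each arrow being one application of Algorithm~\ref{algorithm: mH_(max)(a_1, ..., a_s; q; n)} (adding $r$ independent vertices, with $n$ increasing by $r$ at each stage), and at the last stage check that the output is empty. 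By Theorem~\ref{theorem: algorithm mH_(max)(a_1, ..., a_s; q; n)} this certifies $\mH_{max}(2,2,2,4;5;18) = \emptyset$, and since every member of $\mH(2,2,2,4;5;18)$ embeds (by adding edges) into a maximal one, $\mH(2,2,2,4;5;18) = \emptyset$. Finally I would present the resulting graph counts in a table, as is done for the $p=7$ computations.

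The main obstacle is purely computational: controlling the combinatorial explosion in step 2 of Algorithm~\ref{algorithm: mH_(max)(a_1, ..., a_s; q; n)}, where for each $(+K_{q-1})$-graph $H$ one must enumerate all maximal $K_{q-1}$-free vertex subsets and then all admissible $r$-element multisets of them satisfying conditions (a) and (b). Keeping $t$ (the independence-number cap) as small as the arithmetic above permits, and exploiting \emph{nauty} \cite{MP13} aggressively for isomorph rejection at every intermediate layer, is what makes the run feasible; here, unlike the $p=7$ case, the modified Algorithm~\ref{algorithm: mH_(max)(a_1, ..., a_s; q; n), cone vertices} is not needed because the relevant $\mH_{max}(2,2,2,4;5;\cdot)$ one layer down is not already tabulated, and the sizes involved ($n \leq 18$, $q=5$) are small enough for the unmodified algorithm. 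The only mathematical subtlety is making sure the chosen range of $\alpha(G)$ to be searched is exhaustive — this is handled by combining Nenov's upper bound on $\alpha(G)$ with the elementary lower bound $\alpha(G) \geq \lceil n/(q-1) \rceil = \lceil 18/4 \rceil = 5$ forced by $K_5$-freeness, so that in fact only $\alpha(G) \in \{5,6\}$ must be examined. \qed
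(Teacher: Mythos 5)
Your handling of the first two inequalities (refinement/monotonicity) and your overall computational scheme --- layered applications of Algorithm \ref{algorithm: mH_(max)(a_1, ..., a_s; q; n)}, then arguing that emptiness of $\mH_{max}(2,2,2,4;5;18)$ gives emptiness of $\mH(2,2,2,4;5;18)$ --- match the paper. The fatal flaw is your claimed restriction $\alpha(G)\in\{5,6\}$. The inequality $\alpha(G)\geq\lceil n/(q-1)\rceil$ that you invoke is false: $K_5$-freeness gives $n\leq\alpha(G)\,\chi(G)$, not $n\leq\alpha(G)\,\omega(G)$ (already $C_5$ violates the latter, and Ramsey graphs violate it badly). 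Since $R(4,5)=25>18$, there exist $18$-vertex $K_5$-free graphs with $\alpha=3$, so independence numbers $3$ and $4$ cannot be excluded a priori; in fact they are exactly where the bulk of the work lies in the paper's proof (the runs with independence cap $t=3$ and $t=4$, involving millions of intermediate graphs). With your plan the search would silently skip these cases, and the resulting ``certificate'' of emptiness would prove nothing. The only a priori lower bound available is $\alpha(G)\geq 3$, coming from $R(3,5)=14\leq 18$.

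On the upper side, Nenov's bound gives only $\alpha(G)\leq 6$; the paper obtains the sharper bound $\alpha(G)\leq 5$ from $F_v(2,2,4;5)=13$ together with Proposition \ref{proposition: G - A arrowsv (a_1, ..., a_(i - 1), a_i - 1, a_(i + 1_, ..., a_s)}: deleting a maximum independent set from an $18$-vertex graph $G$ with $G\arrowsv(2,2,2,4)$ leaves a graph on $18-\alpha(G)$ vertices arrowing $(2,2,4)$, whence $18-\alpha(G)\geq 13$. The case $\alpha(G)=5$ is then settled by a single application of Algorithm \ref{algorithm: mH_(max)(a_1, ..., a_s; q; n)} to the unique graph $Q\in\mH(2,2,4;5;13)$ of \cite{XLS10}, and the cases $\alpha(G)=3,4$ by two layered runs started from $\mH_{max}^3(3;5;9)$ and $\mH_{max}^4(3;5;8)$ (obtained with \emph{nauty}), respectively. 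So your proposal must be repaired by replacing the range $\{5,6\}$ with $\{3,4,5\}$ (or $\{3,\dots,6\}$ if you use only Nenov's bound), which turns it from a short run into the substantial computation the paper actually performs.
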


\begin{proof}
The inequalities $F_v(4, 4; 5) \geq F_v(2, 3, 4; 5) \geq F_v(2, 2, 2, 4; 5)$ are easy to prove (see (4.1) in \cite{BN15a}). It remains to prove that $F_v(2, 2, 2, 4; 5) \geq 19$. Suppose that $\mH_{max}(2, 2, 2, 4; 5; 18) \neq \emptyset$ and let $G \in \mH_{max}(2, 2, 2, 4; 5; 18)$. Since the Ramsey number $R(3, 5) = 14$, $\alpha(G) \geq 3$. In \cite{XLS10} it is proved that $F_v(2, 2, 4; 5) = 13$ and $\mH(2, 2, 4; 5; 13) = \set{Q}$, where $Q$ is the unique 13-vertex $K_5$-free graph with independence number 2. From Proposition \ref{proposition: G - A arrowsv (a_1, ..., a_(i - 1), a_i - 1, a_(i + 1_, ..., a_s)} and the equality $F_v(2, 2, 4; 5) = 13$ it follows that $\alpha(G) \leq 5$. By applying Algorithm \ref{algorithm: mH_(max)(a_1, ..., a_s; q; n)} to the graph $Q$ it follows that there are no graphs in $\mH_{max}(2, 2, 2, 4; 5; 18)$ with independence number 5. It remains to prove that there are no graphs in $\mH_{max}(2, 2, 2, 4; 5; 18)$ with independence number 3 or 4. Using \emph{nauty} it is easy to obtain the sets $\mH_{max}^4(3; 5; 8)$ and $\mH_{max}^3(3; 5; 9)$. By applying Algorithm \ref{algorithm: mH_(max)(a_1, ..., a_s; q; n)} ($r = 2, t = 4$) starting from the set $\mH_{max}^4(3; 5; 8)$ were successively obtained all graphs in the sets $\mH_{max}^4(4; 5; 10)$, $\mH_{max}^4(2, 4; 5; 12)$, $\mH_{max}^4(2, 2, 4; 5; 14)$  (see Remark \ref{remark: algorithms case r = 2}), and by applying Algorithm \ref{algorithm: mH_(max)(a_1, ..., a_s; q; n)} ($r = 4, t = 4$) were found no graphs in $\mH_{max}(2, 2, 2, 4; 5; 18)$ with independence number 4. Next, we applied Algorithm \ref{algorithm: mH_(max)(a_1, ..., a_s; q; n)} ($r = 2, t = 3$) starting from the set $\mH_{max}^3(3; 5; 9)$ to successively obtain all graphs in the sets $\mH_{max}^3(4; 5; 11)$, $\mH_{max}^3(2, 4; 5; 13)$, $\mH_{max}^3(2, 2, 4; 5; 15)$ (see Remark \ref{remark: algorithms case r = 2}), and by applying Algorithm \ref{algorithm: mH_(max)(a_1, ..., a_s; q; n)} ($r = 3, t = 3$) were found no graphs in $\mH_{max}(2, 2, 2, 4; 5; 18)$ with independence number 3. The number of graphs obtained in each of the steps is shown in Table \ref{table: finding all graphs in H(2, 2, 2, 4; 5; 18)}. We obtained $\mH_{max}(2, 2, 2, 4; 5; 18) = \emptyset$ and therefore $F_v(2, 2, 2, 4; 5) \geq 19$.
\end{proof}

The upper bound $F_v(4, 4; 5) \leq 23$ is proved in \cite{XLS10} with the help of a 23-vertex transitive graph. We were not able to obtain any other graphs in $\mH(4, 4; 5; 23)$, which leads us to believe that this bound may be exact. We did find a large number of 23-vertex graphs in $\mH(2, 2, 2, 4; 5)$, but so far we have not obtained smaller graphs in this set.

In the end, we shall pose the following question:

\emph{Is it true, that the sequence $F_v(p, p; p + 1), p \geq 2,$ is increasing?}

% -----------------------------------------------------------

\bigskip\medskip

ACKNOWLEDGEMENT. This paper was partially supported by the Sofia University Scientific Research Fund through Contract 80-10-74/20.04.2017.

\vspace{2em}
\begin{appendices}
	
	\section{Results of computations}
	
	\vspace{2em}
	
	\begin{table}[h]
		\centering
		\resizebox{0.75\textwidth}{!}{
		\begin{tabular}{| p{3.5cm} | p{1.5cm} | p{2cm} | p{2.5cm} |}
			\hline
			set							& ind. number			& maximal graphs 		& $(+K_7)$-graphs	\\
			\hline
			$\mH(2, 7; 8; 15)$			& $\leq 4$				& 1						& 1					\\
			$\mH(2, 2, 7; 8; 19)$		& $= 4$					& 0						&					\\
			\hline
			$\mH(3; 8; 6)$				& $\leq 3$				& 1						& 1					\\
			$\mH(4; 8; 8)$				& $\leq 3$				& 1						& 4					\\
			$\mH(5; 8; 10)$				& $\leq 3$				& 3						& 45				\\
			$\mH(6; 8; 12)$				& $\leq 3$				& 12					& 3 104				\\
			$\mH(7; 8; 14)$				& $\leq 3$				& 169					& 4 776 518			\\
			$\mH(2, 7; 8; 16)$			& $\leq 3$				& 34					& 22 896			\\
			$\mH(2, 2, 7; 8; 19)$		& $= 3$					& 0						&					\\
			\hline
			$\mH(3; 8; 7)$				& $\leq 2$				& 1						& 1					\\
			$\mH(4; 8; 9)$				& $\leq 2$				& 1						& 8					\\
			$\mH(5; 8; 11)$				& $\leq 2$				& 3						& 84				\\
			$\mH(6; 8; 13)$				& $\leq 2$				& 10					& 5 394				\\
			$\mH(7; 8; 15)$				& $\leq 2$				& 102					& 4 984 994			\\
			$\mH(2, 7; 8; 17)$			& $\leq 2$				& 2760					& 380 361 736		\\
			$\mH(2, 2, 7; 8; 19)$		& $= 2$					& 0						&					\\
			\hline
			$\mH(2, 2, 7; 8; 19)$		& 						& 0						&					\\
			\hline
		\end{tabular}
		}
		\caption{Steps in finding all maximal graphs in $\mH(2, 2, 7; 8; 19)$}
		\label{table: finding all graphs in H(2, 2, 7; 8; 19)}
	\end{table}
	
	\vspace{2em}
	
	\begin{table}[h]
		\centering
		\resizebox{0.95\textwidth}{!}{
		\begin{tabular}{| p{3.5cm} | p{1.5cm} | p{2cm} | p{2cm} | p{2.5cm} | p{2.5cm} |}
			\hline
			set	& ind. number			& max. graphs	& max. graphs no cone v.	& $(+K_8)$-graphs	& $(+K_8)$-graphs no cone v.	\\
			\hline
			$\mH(2, 2, 7; 9; 16)$		& $\leq 4$				& 1					& 0		& 1					& 0			\\
			$\mH(2, 2, 2, 7; 9; 20)$	& $= 4$					& 0					& 0		&					&			\\
			\hline
			$\mH(4; 9; 7)$				& $\leq 3$				& 1					& 0		& 1					& 0			\\
			$\mH(5; 9; 9)$				& $\leq 3$				& 1					& 0		& 4					& 0			\\
			$\mH(6; 9; 11)$				& $\leq 3$				& 3					& 0		& 45				& 0			\\
			$\mH(7; 9; 13)$				& $\leq 3$				& 12				& 0		& 3 113				& 9			\\
			$\mH(2, 7; 9; 15)$			& $\leq 3$				& 169				& 0		& 4 783 615			& 7 097		\\
			$\mH(2, 2, 7; 9; 17)$		& $\leq 3$				& 36				& 2		& 22 918			& 22		\\
			$\mH(2, 2, 2, 7; 9; 20)$	& $= 3$					& 0					& 0		&					&			\\
			\hline
			$\mH(4; 9; 8)$				& $\leq 2$				& 1					& 0		& 1					& 0			\\
			$\mH(5; 9; 10)$				& $\leq 2$				& 1					& 0		& 8					& 0			\\
			$\mH(6; 9; 12)$				& $\leq 2$				& 3					& 0		& 85				& 1			\\
			$\mH(7; 9; 14)$				& $\leq 2$				& 10				& 0		& 5 474				& 80		\\
			$\mH(2, 7; 9; 16)$			& $\leq 2$				& 103				& 1		& 5 346 982			& 361 988	\\
			$\mH(2, 2, 7; 9; 18)$		& $\leq 2$				& 2845				& 85	& 387 948 338  		& 7 586 602	\\
			$\mH(2, 2, 2, 7; 9; 20)$	& $= 2$					& 0					& 0		&					&			\\
			\hline
			$\mH(2, 2, 2, 7; 9; 20)$	& 						& 0					& 0		&					&			\\
			\hline
		\end{tabular}
		}
		\caption{Steps in finding all maximal graphs in $\mH(2, 2, 2, 7; 9; 20)$}
		\label{table: finding all graphs in H(2, 2, 2, 7; 9; 20)}
	\end{table}
	
	\begin{table}[h]
		\centering
		\resizebox{0.95\textwidth}{!}{
		\begin{tabular}{| p{3.5cm} | p{1.5cm} | p{2cm} | p{2cm} | p{2.5cm} | p{2.5cm} |}
			\hline
			set	& ind. number				& max. graphs	& max. graphs no cone v.	& $(+K_9)$-graphs	& $(+K_9)$-graphs no cone v.	\\
			\hline
			$\mH(2, 2, 2, 7; 10; 17)$		& $\leq 4$				& 1					& 0		& 1					& 0			\\
			$\mH(2, 2, 2, 2, 7; 10; 21)$	& $= 4$					& 0					& 0		&					&			\\
			\hline
			$\mH(5; 10; 8)$					& $\leq 3$				& 1					& 0		& 1					& 0			\\
			$\mH(6; 10; 10)$				& $\leq 3$				& 1					& 0		& 4					& 0			\\
			$\mH(7; 10; 12)$				& $\leq 3$				& 3					& 0		& 45				& 0			\\
			$\mH(2, 7; 10; 14)$				& $\leq 3$				& 12				& 0		& 3 115				& 2			\\
			$\mH(2, 2, 7; 10; 16)$			& $\leq 3$				& 169				& 0		& 4 784 483			& 868		\\
			$\mH(2, 2, 2, 7; 10; 18)$		& $\leq 3$				& 36				& 0		& 22 919			& 1			\\
			$\mH(2, 2, 2, 2, 7; 10; 21)$	& $= 3$					& 0					& 0		&					&			\\
			\hline
			$\mH(5; 10; 9)$					& $\leq 2$				& 1					& 0		& 1					& 0			\\
			$\mH(6; 10; 11)$				& $\leq 2$				& 1					& 0		& 8					& 0			\\
			$\mH(7; 10; 13)$				& $\leq 2$				& 3					& 0		& 85				& 0			\\
			$\mH(2, 7; 10; 15)$				& $\leq 2$				& 10				& 0		& 5 495				& 21		\\
			$\mH(2, 2, 7; 10; 17)$			& $\leq 2$				& 103				& 0		& 5 371 651			& 24 669	\\
			$\mH(2, 2, 2, 7; 10; 19)$		& $\leq 2$				& 2848				& 3		& 387 968 658  		& 20 320	\\
			$\mH(2, 2, 2, 2, 7; 10; 21)$	& $= 2$					& 0					& 0		&					&			\\
			\hline
			$\mH(2, 2, 2, 2, 7; 10; 21)$	& 						& 0					& 0		&					&			\\
			\hline
		\end{tabular}
		}
		\caption{Steps in finding all maximal graphs in $\mH(2, 2, 2, 2, 7; 10; 21)$}
		\label{table: finding all graphs in H(2, 2, 2, 2, 7; 10; 21)}
	\end{table}
	
	\begin{table}[h]
		\centering
		\resizebox{0.95\textwidth}{!}{
		\begin{tabular}{| p{3.5cm} | p{1.5cm} | p{2cm} | p{2cm} | p{2.5cm} | p{2.5cm} |}
			\hline
			set	& ind. number					& max. graphs	& max. graphs no cone v.	& $(+K_{10})$-graphs	& $(+K_{10})$-graphs no cone v.	\\
			\hline
			$\mH(2, 2, 2, 2, 7; 11; 18)$		& $\leq 4$				& 1					& 0		& 1					& 0			\\
			$\mH(2, 2, 2, 2, 2, 7; 11; 22)$		& $= 4$					& 0					& 0		&					&			\\
			\hline
			$\mH(6; 11; 9)$						& $\leq 3$				& 1					& 0		& 1					& 0			\\
			$\mH(7; 11; 11)$					& $\leq 3$				& 1					& 0		& 4					& 0			\\
			$\mH(2, 7; 11; 13)$					& $\leq 3$				& 3					& 0		& 45				& 0			\\
			$\mH(2, 2, 7; 11; 15)$				& $\leq 3$				& 12				& 0		& 3 116				& 1			\\
			$\mH(2, 2, 2, 7; 11; 17)$			& $\leq 3$				& 169				& 0		& 4 784 638			& 155		\\
			$\mH(2, 2, 2, 2, 7; 11; 19)$		& $\leq 3$				& 36				& 0		& 22 919			& 0			\\
			$\mH(2, 2, 2, 2, 2, 7; 11; 22)$		& $= 3$					& 0					& 0		&					&			\\
			\hline
			$\mH(6; 11; 10)$					& $\leq 2$				& 1					& 0		& 1					& 0			\\
			$\mH(7; 11; 12)$					& $\leq 2$				& 1					& 0		& 8					& 0			\\
			$\mH(2, 7; 11; 14)$					& $\leq 2$				& 3					& 0		& 85				& 0			\\
			$\mH(2, 2, 7; 11; 16)$				& $\leq 2$				& 10				& 0		& 5 502				& 7			\\
			$\mH(2, 2, 2, 7; 11; 18)$			& $\leq 2$				& 103				& 0		& 5 374 143			& 2 492		\\
			$\mH(2, 2, 2, 2, 7; 11; 20)$		& $\leq 2$				& 2848				& 0		& 387 968 676  		& 18		\\
			$\mH(2, 2, 2, 2, 2, 7; 11; 22)$		& $= 2$					& 0					& 0		&					&			\\
			\hline
			$\mH(2, 2, 2, 2, 2, 7; 11; 22)$		& 						& 0					& 0		&					&			\\
			\hline
		\end{tabular}
		}
		\caption{Steps in finding all maximal graphs in $\mH(2, 2, 2, 2, 2, 7; 11; 22)$}
		\label{table: finding all graphs in H(2, 2, 2, 2, 2, 7; 11; 22)}
	\end{table}
	
	\begin{table}[h]
		\centering
		\resizebox{0.75\textwidth}{!}{
		\begin{tabular}{| p{3.5cm} | p{1.5cm} | p{2cm} | p{2.5cm} |}
			\hline
			set							& ind. number			& maximal graphs 		& $(+K_4)$-graphs	\\
			\hline
			$\mH(2, 2, 4; 5; 13)$		& $\leq 5$				& 1						& 1					\\
			$\mH(2, 2, 2, 4; 5; 18)$	& $= 5$					& 0						&					\\
			\hline
			$\mH(3; 5; 8)$				& $\leq 4$				& 7						& 274				\\
			$\mH(4; 5; 10)$				& $\leq 4$				& 44					& 65 422			\\
			$\mH(2, 4; 5; 12)$			& $\leq 4$				& 1 059					& 18 143 174		\\
			$\mH(2, 2, 4; 5; 14)$		& $\leq 4$				& 13					& 71				\\
			$\mH(2, 2, 2, 4; 5; 18)$	& $= 4$					& 0						&					\\
			\hline
			$\mH(3; 5; 9)$				& $\leq 3$				& 11					& 2 252				\\
			$\mH(4; 5; 11)$				& $\leq 3$				& 135					& 1 678 802			\\
			$\mH(2, 4; 5; 13)$			& $\leq 3$				& 11 439				& 2 672 047 607		\\
			$\mH(2, 2, 4; 5; 15)$		& $\leq 3$				& 1 103					& 78 117			\\
			$\mH(2, 2, 2, 4; 5; 18)$	& $= 3$					& 0						&					\\
			\hline
			$\mH(2, 2, 2, 4; 5; 18)$	& 						& 0						&					\\
			\hline
		\end{tabular}
		}
		\caption{Steps in finding all maximal graphs in $\mH(2, 2, 2, 4; 5; 18)$}
		\label{table: finding all graphs in H(2, 2, 2, 4; 5; 18)}
	\end{table}
	
\end{appendices}

\clearpage

{\small

  \vspace{12pt}
\baselineskip10pt

\vskip10pt
%\begin{flushright}
%{\it Received on September 16, 2017}
%\end{flushright}
\vskip20pt
 \footnotesize
\begin{flushleft}
Aleksandar Bikov, Nedyalko Nenov\smallskip\\
Faculty of Mathematics and Informatics \\
``St.~Kl.~Ohridski" University of Sofia \\
5,~J.~Bourchier blvd., BG-1164 Sofia\\
BULGARIA \smallskip\\
e-mails: $\begin{array}{l}
\text{asbikov@fmi.uni-sofia.bg}\\
\text{nenov@fmi.uni-sofia.bg}
        \end{array}$
\end{flushleft}}

\end{document}